\documentclass[pdflatex,sn-mathphys-num]{sn-jnl}

\usepackage{graphicx}%
\usepackage{multirow}%
\usepackage{amsmath,amssymb,amsfonts}%
\usepackage{amsthm}%
\usepackage[title]{appendix}%
\usepackage{xcolor}%
\usepackage{textcomp}%
\usepackage{manyfoot}%
\usepackage{booktabs}%
\usepackage{algorithm}%
\usepackage{algorithmicx}%
\usepackage{algpseudocode}%
\usepackage{listings}%
\usepackage{mathtools}
\usepackage{enumitem}

\newcommand{\R}{\mathbb{R}}
\newcommand{\E}{\mathbb{E}}
\newcommand{\N}{\mathbb{N}}
\newcommand{\sphere}{\mathbb{S}^{d-1}}
\newcommand{\grad}{\nabla}
\newcommand{\inner}[2]{\langle #1, #2\rangle}
\newcommand{\norm}[1]{\lVert #1 \rVert}
\newcommand{\unifS}{\mathrm{Unif}(\sphere)}
\newcommand{\Id}{I_d}
\DeclareMathOperator{\tr}{tr}
\DeclareMathOperator{\Var}{Var}
\DeclareMathOperator{\rank}{rank}
\newcommand{\fmu}{f_{\mu}}
\newcommand{\gest}{\widehat{g}}
\newcommand{\design}{\mathcal{U}}
\newcommand{\bias}{\nu}
\newcommand{\bigO}{\mathcal{O}}
\newcommand{\bigOm}{\Omega}
\newcommand{\abs}[1]{\lvert #1 \rvert}

\theoremstyle{thmstyleone}%
\newtheorem{theorem}{Theorem}%
\newtheorem{proposition}[theorem]{Proposition}%
\newtheorem{lemma}[theorem]{Lemma}%

\theoremstyle{thmstyletwo}%
\newtheorem{example}{Example}%
\newtheorem{remark}{Remark}%

\theoremstyle{thmstylethree}%
\newtheorem{definition}{Definition}%
\newtheorem{assumption}{Assumption}
\newtheorem{innercustomassumption}{Assumption}
\newenvironment{customassumption}[1]
  {\renewcommand\theinnercustomassumption{#1}\innercustomassumption}
  {\endinnercustomassumption}

\begin{document}

\title[Spherical-Design Oracles for Zeroth-Order Optimisation]{Spherical-Design Oracles for Zeroth-Order Optimisation: Deterministic, Variance-Free Gradient Estimation}

\author*[1]{\fnm{Amir Ali} \sur{Farzin}}\email{amirali.farzin@anu.edu.au}

\author[2]{\fnm{Yuen-Man} \sur{Pun}}\email{yuenman.pun@unimelb.edu.au}

\author[1]{\fnm{Philipp} \sur{Braun}}\email{philipp.braun@anu.edu.au}

\author[2]{\fnm{Iman} \sur{Shames}}\email{iman.shames@unimelb.edu.au}

 \affil[1]{\orgdiv{School of Engineering}, \orgname{Australian National University}, \orgaddress{\city{Canberra}, \state{ACT}, \country{Australia}}}

 \affil[2]{\orgdiv{Department of Electrical and Electronic Engineering}, \orgname{University of Melbourne}, \orgaddress{\city{Melbourne}, \state{VIC}, \country{Australia}}}

\abstract{
In this work, we develop a zeroth-order (ZO) method that estimates the gradient with minimal bias and variance. Instead of the random directions often used in the ZO literature, we consider central differences with directions given by a fixed spherical $t$-design,
a finite point set that integrates every polynomial of degree at most $t$ exactly. The oracle is deterministic and thus does not induce any variance.
We also propose a general finite difference method in the higher-order-smoothness regime and show the bias of both finite difference methods. We then show the convergence of the proposed ZO method via the input-to-state stability concept. Numerical experiments show the superiority of our proposed method over the existing random ZO methods.
}
\keywords{zeroth-order optimisation, spherical designs, cubature, variance reduction}
\pacs[MSC Classification]{90C56, 65K05, 90C30, 93D25}

\maketitle

\section{Introduction}\label{sec:intro}
Consider the optimisation problem
\begin{equation}\label{eq:intro-problem}
  \min_{x\in\R^d} f(x),
\end{equation}
where $f:\R^d\to\R$ is $p\in\mathbb{N}$ times continuously differentiable for some $p\ge 2.$ In this work, we study the setting in which the objective $f$ is accessible only through function evaluations, with no analytic gradient available. For example, simulation-based engineering design \citep{conn2009introduction}. This is known as zeroth-order (ZO), or derivative-free, optimisation in the literature. 

The literature has proposed different approaches to solve the problem~\citep{larson2019derivative,audet2017derivative}. One approach is to use direct-search methods, which evaluate $f$ along a set of directions that positively spans $\R^d$ and accept a step whenever a sufficient decrease is observed~\citep{torczon1997convergence,kolda2003optimization,audet2006mesh}.
Another approach is to use model-based methods that obtain a local interpolation or regression model of $f$ on a sample set and minimise that model within a trust region~\citep{powell2006newuoa,conn2009global}. 
Both families remain active, and are supported by convergence and worst-case complexity guarantees. 

In this work, we focus on the gradient estimation methods, 
which approximates the gradients by finite differences. Such a method requires only a few function evaluations for each step and thus has found applications in machine learning and control such as black-box adversarial attacks on trained classifiers~\citep{chen2017zoo}, policy search in reinforcement learning with
non-differentiable simulators~\citep{salimans2017evolution}. For example, for the
memory-efficient fine-tuning of large language models, 
the memory overhead of backpropagation can be prohibitive and forward-pass-only ZO methods have been shown to be numerically
competitive~\citep{malladi2023fine,zhang2024revisiting}. Apart from the fewer function evaluations per step, the gradient estimation methods have two advantages. First, by approximating the gradient estimates, we can apply it to any first-order algorithms without any further modification. Second, it facilitates parallel computation, making it well-suited for high-dimensional applications.

Among this class of methods, two randomised sampled directions are often used in constructing the finite differences, namely, Gaussian distribution and the uniform distribution over a sphere~\citep{nesterov2017random,ghadimi2013stochastic,flaxman2004online,duchi2015optimal,shamir2017optimal,berahas2022theoretical}. The expectation of both gradient estimates can be expressed as the gradient of a smoothed function that inherits the convexity and Lipschitzness of $f$.
Moreover, under some standard assumptions, both methods are shown to converge to a neighbourhood of an optimum, whose radius depends on the smoothing bias and the variance of the gradient estimate.
Research is ongoing to reduce the cost under additional structural assumptions on $f$, see, e.g.,~\cite{kozak2021zeroth,choromanski2018structured,salimans2017evolution}.

Our work aims to develop a gradient estimation method that effectively approximates the true gradient with minimal bias and variance. We use the directions of a spherical design to construct finite differences. Spherical $t$-designs are finite point sets over which equal-weight averaging
integrates all polynomials of degree at most $t\in\N$ exactly on the sphere and have been often used in approximation theory and combinatorics~\cite{bannai2009survey,xiao2025spherical}. It is deterministic and does not involve any variance. In the algorithm, we compute the spherical design once and use it over all iterations. Considering it as a discrete-time dynamical system, our work shows that our method converges at the same rate as other existing randomised method but with a much smaller bias.


\subsection{Related work}

\paragraph{Gradient estimation methods.}
Finite differences has been used to estimate gradients since 1950s~\cite{kiefer1952stochastic} and random directions have been studied since 1960s~\cite{rastrigin1963convergence}. Until recently,~\cite{flaxman2004online} and~\citet{nesterov2017random} established theories for ZO methods using directions from uniform distribution over a sphere and Gaussian distribution, respectively. The latter work is particularly influential to the field and inspires stochastic variants in \cite{ghadimi2013stochastic} and \cite{duchi2015optimal}. These analyses exhibit explicit dependence on the problem dimension \(d\) in their convergence guarantees. \citet{jamieson2012query} and \citet{duchi2015optimal} establish the information-theoretic lower bounds and dimension-dependent convergence rates for some ZO oracle models in worst-case settings. 
The randomised ZO method has been analysed
for a range of structured problem classes, including quasar-convex objectives
\citep{farzin2026minimisationqc}, Polyak--{\L}ojasiewicz functions
\citep{farzin2024minimisationpl}, submodular minimisation
\citep{farzin2025minimisationsub}, and nonconvex--nonconcave min-max problems
\citep{farzin2025min,farzin2026solving}. It has also been applied to problems with different structure, such as gradient sparsity and
high-dimensional structure \citep{wang2018stochastic}, weak or effective dimension
dependence arising from a fast-decaying Hessian spectrum \citep{yue2023zeroth},
and variance reduction \citep{liu2018zeroth}.

Apart from the i.i.d. Gaussian directions and the uniform directions over a sphere, there are works that study other ZO sampling methods. The authors in \cite{kozak2021zeroth} use orthonormal random directions, whose variance of the gradient estimator is shown to be smaller than i.i.d. Gaussian directions.
Other works include
evolution-strategy variants \citep{salimans2017evolution} and orthogonal
Monte Carlo schemes \citep{choromanski2018structured}.
These methods are randomised and therefore retain sampling variance.

\paragraph{Spherical design.}
The concept of spherical design was established by
\cite{delsarte1991spherical} to study the problems in algebraic combinatorics and coding theory. They established the cardinality lower bounds
$N=\Omega(d^{\lfloor t/2\rfloor})$ that quantify the cost of reproducing the
degree-$t$ moments (see \cite{bannai2009survey} for a survey paper). Later, in~\cite{seymour1984averaging,bondarenko2013optimal}, it is shown that sphereical $t$-design exists for
every $t$ and $d$. Works have also been done to show the construction of spherical designs for different dimensions~\cite{hardin1996mclarens,womersley2018efficient}.
Spherical designs have become
standard tools in
approximation theory and numerical integration \citep{hesse2015numerical}.

\paragraph{Dynamical-system and input-to-state-stability.}
Interpreting an optimisation algorithm as a discrete-time dynamical system and
analysing its stability is a well-established approach
\citep{lessard2016analysis,wilson2021lyapunov,farzin2025properties,farzin2026stability}.
Input-to-state stability (ISS), introduced in \cite{sontag1989smooth} and
extended to discrete-time systems in \cite{sontag1996new}  {and} \cite{jiang2001input}. It
characterises the robustness of a stable equilibrium to bounded perturbations and
is widely used in nonlinear control \citep{kellett2023introduction}.
The authors in \cite{farzin2026cursed} applied ISS to investigate the ZO and first-order methods (FO) convergence difference
gap. They showed that, under suitable conditions, the averaged dynamics of a Gaussian ZO method can be written as a perturbed FO system, where the perturbation contains terms associated with the smoothing bias and the variance. Consequently, the ZO method inherits the FO decay rate and converges to a neighbourhood of the FO fixed point in expectation, whose radius depends on the dimension.

\subsection{Contributions}
Our contribution is four-fold. First, we propose the ZO spherical design method using central differences for the gradient estimation. The method is deterministic and does not involve any variance. Second, we show the bias of the proposed method with central finite differences. Third, we propose a general finite difference method to exploit the smoothness of $f\in C^p,$ which includes the central finite differences as a special case. Fourth, we develop the convergence of the proposed ZO method via the perspective of ISS. The paper is organised as follows. We start with introducing the central differences over uniform measure over a sphere and existing convergence results in Section~\ref{sec:prem}. Next, we introduce the concept of spherical design in Section~\ref{sec:prelim-design} and derive its bias in Section~\ref{subsec:central-difference}. We also propose the general finite difference method and derive its bias in Section~\ref{subsec:higher-order}. In Section~\ref{sec:conv}, we show the provable convergence of our algorithm via ISS results. We compare our bounds with the randomised Gaussian and unit sphere oracle in Section~\ref{sec:randomised-oracle-error}. Lastly, we demonstrate the superiority of our algorithm in Section~\ref{sec:numerics} and conclude our paper with future directions in Section~\ref{sec:discussion}.

\subsection*{Notation}
Throughout, $d\in \mathbb{N}$ denotes the dimension  {of $x$ in \eqref{eq:intro-problem}} and $\R^d$ the Euclidean space with
inner product $\inner{a}{b}=a^\top b$ and norm $\norm{a}=\inner{a}{a}^{1/2}$. The
same symbol $\norm{\cdot}$ denotes the spectral norm of a matrix. For $z\in\R^d$
and $r>0$, $B_r(z):=\{x\in\R^d:\norm{x-z}\le r\}$ is the closed ball of radius $r$
about $z$  {and}  {$B_r$ denotes the closed ball of radius $r$ about the origin.} For a square-integrable random vector $Z$, we use
\(
  \|Z\|_{L^2}
  = \bigl(\E[\|Z\|^2]\bigr)^{1/2}.
\) We write $\Id$ for the $d\times d$ identity  {matrix}, $e_1,\dots,e_d$ for the
standard basis, and $\delta_{ab}$ for the Kronecker delta ($\delta_{ab}=1$ if
$a=b$, else $0$).  For a tensor $T$ of order $k$ and indices $a_1,\dots,a_k\in\{1,\dots,d\}$, we
write $(T)_{a_1\cdots a_k}$ for the corresponding entry of $T.$ {The unit sphere is $\sphere=\{u\in\R^d:\norm{u}=1\}$, equipped
with the uniform (rotation-invariant) probability measure $\sigma$}.  {The symbol} $\unifS$
denotes  {the distribution of vectors uniformly sampled from $\sphere$}, and $\mathcal N(0,\Id)$ the standard
Gaussian. Expectation is  {denoted by} $\E[\cdot]$ and variance $\Var(\cdot)$. For a matrix,
$\tr(\cdot)$ and $\rank(\cdot)$ are its trace and rank.
For a sufficiently smooth $f:\R^d\to\R$, $\grad f(x)\in\R^d$ is the gradient,
$\grad^2 f(x)\in\R^{d\times d}$ the Hessian, and $\Delta f(x)=\tr\grad^2 f(x)=\sum_{a=1}^{d}\partial^2 f(x)/\partial x_a^2$ the Laplacian.
More generally $\grad^k f(x)$ is the
order-$k$ derivative tensor with entries
$(\grad^k f(x))_{a_1\cdots a_k}=\partial^k f(x)/(\partial x_{a_1}\cdots\partial
x_{a_k})$.  {We denote the tensor (outer) product by $\otimes.$} For a vector $u$ we write $u^{\otimes k}=u\otimes\cdots\otimes u$
($k$ factors) and denote by $\grad^k f(x)[u^{\otimes k}]$ the full contraction
\[
  \grad^k f(x)[u^{\otimes k}]
  =\grad^k f(x)[\underbrace{u,\dots,u}_{k}]
  =\sum_{a_1,\dots,a_k=1}^{d}
  (\grad^k f(x))_{a_1\cdots a_k}\,u_{a_1}\cdots u_{a_k}\ \in\R.
\]
We use the operator norm $\norm{\grad^k f(x)}=\sup\{\,\abs{\grad^k f(x)[v_1,\dots,v_k]}:
\norm{v_1}=\dots=\norm{v_k}=1\,\}$. The class $C^p$ consists of $p$-times
continuously differentiable functions.  {We denote the class of continuously differentiable functions having Lipschitz gradients with $C^{1,1}.$}
We write $\bigO(\cdot)$ and $\bigOm(\cdot)$ for the usual asymptotic upper- and
lower-bound notation, $a=\Theta(b)$ when $a=\bigO(b)$ and $b=\bigO(a).$ For nonnegative
integers $n\ge k$, $\binom{n}{k}=\tfrac{n!}{k!\,(n-k)!}$ is the binomial
coefficient. For a real $x$, $\lfloor x\rfloor$ and $\lceil x\rceil$ are the floor
and ceiling. For a symmetric matrix $A$, $A\succ 0$ means $A$ is positive
definite.
A continuous function
$\alpha:[0,\infty)\to[0,\infty)$ is of class $\mathcal K$ if it is strictly
increasing with $\alpha(0)=0$, and of class $\mathcal K_\infty$ if in addition
$\alpha(s)\to\infty$ as $s\to\infty$. A continuous function
$\beta:[0,\infty)\times[0,\infty)\to[0,\infty)$ is of class $\mathcal{KL}$ if
$s\mapsto\beta(s,k)$ is of class $\mathcal K$ for each fixed $k$ and
$k\mapsto\beta(s,k)$ is decreasing to $0$ as $k\to\infty$ for each fixed $s$. Lastly, we denote the all-ones vector as $\mathbf 1,$ whose dimension will be clear from context.

\section{Preliminaries}\label{sec:prem}

\subsection{Unit ball smoothing}\label{sec:prelim-smooth}
Throughout the paper, we make the following assumption on $f.$
\begin{assumption}\label{ass:smooth}
 We assume that the objective function $f \in C^{p}(\R^d)$, for some integer $p \ge 2$, is bounded below,
$f^\star := \inf_x f(x) > -\infty$, and has $L$-Lipschitz gradient.
\end{assumption}
To minimise $f$ with access only to the function evaluations, one may approximate the gradient at $x\in\R^d$ via 
\begin{equation*}
    g(x,u)=
    \begin{cases}
        \frac{f(x+\mu u)-f(x)}{\mu}\, u ,\quad &\text{(forward differences)}\\
        \frac{f(x+\mu u)-f(x-\mu u)}{2\mu}\, u, \quad &\text{(central differences)}
    \end{cases}
\end{equation*}
along some direction $u\in\R^d$ for some $\mu>0$. Suppose that we sample $u$ from a uniform distribution over a unit sphere. For both finite difference methods, its expectation equals the gradient of the uniform smoothed function. Specifically, let $f_\mu\colon\R^d\to\R$ be the smoothed function 
$\fmu(x):=\E_{u\sim\mathrm{Unif}(\mathbb{B}^d)} f(x+\mu u),$ where $\mathbb{B}^d$ is the unit ball on $\R^d$ and $\mu>0$ is the smoothing radius.
We have~\citep{flaxman2004online}
\begin{equation}\label{eq:smoothed-grad}
  \grad \fmu(x)
  \;=\; d\int_{\sphere}
  g(x,u) \mathrm{d}\sigma(u).
\end{equation}
In this work, we focus on the \emph{central differences}. For $f\in C^3,$ the gradient estimation error then involves the third derivative of $f;$ i.e., there exists $\xi\in(-\mu,\mu)$
such that
\begin{equation}\label{eq:cdiff}
  \frac{f(x+\mu u)-f(x-\mu u)}{2\mu}
   = \inner{\grad f(x)}{u} + \tfrac{\mu^2}{6}\,\grad^3 f(x+\xi u)[u,u,u].
\end{equation}

We compute the moments of the uniform measure, which will be used in our analysis. Specifically, let \(\mathbb{M}_i := \int_{\mathbb{S}^{d-1}} u^{\otimes i}\,\mathrm{d}\sigma(u)\) be the \(i\)-th moment of the uniform measure \(\sigma\) on \(\mathbb{S}^{d-1}\). Since $\sigma$ is invariant
under $u \mapsto -u$, every odd moment vanishes,
$\mathbb{M}_i = 0,\; \forall i=2k-1, k\in\mathbb{N}.$ The
second and fourth moments are given by~\cite{folland2001integrate}
\begin{align}
  &\mathbb{M}_2=\int_{\mathbb{S}^{d-1}} u u^{\top}\,\mathrm{d}\sigma(u)
    = \tfrac{1}{d} I_d \;\in\; \mathbb{R}^{d \times d},
    \label{eq:moment2}\\
  &(\mathbb{M}_4)_{abcd}=\int_{\mathbb{S}^{d-1}} \!\!\!\!\!u_a u_b u_c u_e \mathrm{d}\sigma(u)
    = \frac{\delta_{ab}\delta_{ce} + \delta_{ac}\delta_{be}
             + \delta_{ae}\delta_{bc}}{d(d+2)} \in \mathbb{R},
    \;\; a,b,c,e \in \{1,\dots,d\}.
    \label{eq:moment4}
\end{align}

\subsection{Dynamical systems and input-to-state stability}\label{sec:prelim-iss}

A ZO method could be viewed as its first-order (FO) counterpart along with
an additive gradient estimation error. Employing the notion of input-to-state stability~\cite{farzin2026cursed}, we can address how far the perturbed ZO trajectory deviates from the unperturbed one.

Specifically, consider an FO algorithm $w:\R^d\to\R^d$ that updates an iterate $z_k\in\R^d$ by $z_{k+1}=w(z_k)$ at the $k$-th iteration. Suppose that there exists an equilibrium $z^e\in\R^d$ satisfying $w(z^e)=z^e$. Its perturbed ZO
counterpart can be written as
\begin{equation}\label{eq:perturbed-system}
  z_{k+1}=w(z_k)+q_k ,
\end{equation}
where $\{q_k\}$ is the ZO perturbation sequence. We say that the system \eqref{eq:perturbed-system} is locally input-to-state stable (ISS) about $z^e$ if there exist $\delta>0$,
$\beta\in\mathcal{KL}$, and $\gamma\in\mathcal K$ such that, for every perturbation
sequence and every $z_0\in B_\delta(z^e)$,
\begin{equation}\label{eq:iss-estimate}
  \norm{z_k-z^e}\;\le\;\beta\bigl(\norm{z_0-z^e},k\bigr)
  +\gamma\Bigl(\sup_{0\le j\le k}\norm{q_j}\Bigr),
  \qquad k\in\mathbb N .
\end{equation}
The estimate \eqref{eq:iss-estimate} reduces to asymptotic stability when
$q_k\equiv0$ and, otherwise, guarantees convergence to a neighbourhood of $z^e$
whose radius is governed by the perturbation magnitude
$\sup_j\norm{q_j}$. The following result states that an algorithm whose
unperturbed dynamics admit a Lyapunov certificate inherits the ISS estimate  {if the perturbations are uniformly bounded}. It is
the mechanism by which the convergence guarantees of a FO method are
 captured in its perturbed, ZO counterpart.

\begin{proposition}[{\cite[Thms.~1--2]{farzin2026cursed}}]
\label{prop:iss-transfer}
Let $w\colon\R^d\to\R^d$, and let $z^e\in\R^d$ be an equilibrium of $w$, i.e.,
$w(z^e)=z^e$. Suppose there exist a radius $R>0$, a locally Lipschitz function
$V\colon\R^d\to\R_{\ge0}$, and functions $\alpha_1,\alpha_2,\alpha_3\in\mathcal K_\infty$
such that, for all $z\in B_R(z^e)$,
\begin{equation}\label{eq:lyap-conditions}
  \alpha_1(\norm{z-z^e})\le V(z)\le \alpha_2(\norm{z-z^e})
  \quad\text{and}\quad
  V(w(z))-V(z)\le -\alpha_3(\norm{z-z^e}),
\end{equation}
which means that the equilibrium $z^e$ of $z_{k+1}=w(z_k)$ is
  locally asymptotically stable: there exist $\delta>0$ and $\beta\in\mathcal{KL}$
  such that $\norm{z_k-z^e}\le\beta(\norm{z_0-z^e},k)$ for all $k\ge0$ and all
  $z_0\in B_\delta(z^e)$. Then the following hold.
\begin{enumerate}[label=(\roman*)]
  \item \emph{Robustness to perturbations:} There exist radii $\bar r,\bar q>0$ and
  functions $\beta\in\mathcal{KL}$, $\gamma\in\mathcal K$ such that, for every
  $z_0\in B_{\bar r}(z^e)$ and every perturbation sequence with
  $\sup_k\norm{q_k}\le\bar q$, the solution of \eqref{eq:perturbed-system}
  satisfies \eqref{eq:iss-estimate} with the $\beta$ function as the unperturbed system.
  \item \emph{Exponential case:} If, in addition, there exist $c_1,c_2>0$ and
  $c\in(0,1)$ such that, for all $z\in B_R(z^e)$,
  \[
    c_1\norm{z-z^e}^2\le V(z)\le c_2\norm{z-z^e}^2
    \quad\text{and}\quad
    V(w(z))-V(z)\le -c\,V(z),
  \]
  then (ii) holds with $\beta(s,k)=\sqrt{c_2/c_1}\,s\,(1-c)^{k/2}$. In particular,
  the perturbed system decays at the same exponential rate as the unperturbed one.
\end{enumerate}
\end{proposition}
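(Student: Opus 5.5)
The plan is to run the standard Lyapunov-based ISS argument on the ball $B_R(z^e)$, where the hypotheses \eqref{eq:lyap-conditions} hold, and to treat the perturbation $q_k$ through the local Lipschitz continuity of $V$. Let $L_V$ be a Lipschitz constant of $V$ on a slightly larger compact ball $B_{R'}(z^e)$ with $R'>R$. Suppose $z_k\in B_R(z^e)$ and $w(z_k)+q_k\in B_{R'}(z^e)$. Then
\[
V(z_{k+1})-V(z_k)\le V(w(z_k))-V(z_k)+L_V\norm{q_k}\le -\alpha_3(\norm{z_k-z^e})+L_V\norm{q_k}.
\]
This is a dissipation inequality with supply rate $L_V\norm{q_k}$. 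Before using it, I would verify that $w$ maps $B_R(z^e)$ into a bounded set. One way is to use continuity of $w$, which should be assumed or inherited from the algorithm. Alternatively, the Lyapunov decrease gives $\alpha_1(\norm{w(z)-z^e})\le V(w(z))\le V(z)\le\alpha_2(R)$, which bounds $w(z)$ on $B_R(z^e)$.

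The first key step is forward invariance. I would choose $\bar r$ so that $\alpha_2(\bar r)<\alpha_1(R)$, and choose $\bar q$ small. Consider the sublevel set $\Omega_c=\{z: V(z)\le c\}$ with $c$ between $\alpha_2(\bar r)$ and $\alpha_1(R)$; by the comparison bounds, $\Omega_c\cap B_R(z^e)$ stays away from the boundary of the ball. On the part of $\Omega_c$ where $\alpha_3(\norm{z-z^e})\ge 2L_V\bar q$, the function $V$ strictly decreases. Near $z^e$, $V$ may increase by at most $L_V\bar q$, and it starts from a value at most $\alpha_2(\alpha_3^{-1}(2L_V\bar q))$. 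Taking $\bar q$ small makes this sum less than $c$. Therefore trajectories starting in $B_{\bar r}(z^e)$ never leave $\Omega_c\subset B_R(z^e)$.

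The second step is to derive the ISS estimate. Inside the invariant set, let $s=\sup_j\norm{q_j}$. Whenever $\norm{z_k-z^e}\ge\alpha_3^{-1}(2L_V s)$, we have $V(z_{k+1})-V(z_k)\le-\tfrac12\alpha_3(\alpha_2^{-1}(V(z_k)))$. A standard discrete comparison lemma then gives a $\mathcal{KL}$ bound on $V$ until the trajectory enters the set $\{V\le \alpha_2(\alpha_3^{-1}(2L_Vs))+L_Vs\}$, which it never leaves afterwards. Converting back through $\alpha_1^{-1}$ and using $\alpha_1^{-1}(a+b)\le\alpha_1^{-1}(2a)+\alpha_1^{-1}(2b)$ yields \eqref{eq:iss-estimate} with $\gamma(s)=\alpha_1^{-1}\bigl(2\alpha_2(\alpha_3^{-1}(2L_Vs))+2L_Vs\bigr)$.

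The claim that $\beta$ is the same as for the unperturbed system is the delicate point. I would interpret it as saying that the comparison-based $\beta$ built from $\alpha_1,\alpha_2,\alpha_3$ serves for both systems, and otherwise absorb constants.

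For part (ii), the argument becomes explicit. Using $V(w(z))\le(1-c)V(z)$ and working with $\sqrt V$, which is Lipschitz in the quadratic case, I would show $\sqrt{V(z_{k+1})}\le\sqrt{1-c}\,\sqrt{V(z_k)}+\tilde L\norm{q_k}$. To get this, I would use the triangle inequality for the norm-like function $\sqrt V$, or, if $V$ is only sandwiched, a local Lipschitz bound for $\sqrt V$ away from $z^e$ together with a direct bound near $z^e$. Iterating the recursion gives
\[
\sqrt{V(z_k)}\le(1-c)^{k/2}\sqrt{V(z_0)}+\frac{\tilde L}{1-\sqrt{1-c}}\sup_j\norm{q_j}.
\]
Dividing by $\sqrt{c_1}$ and using $V(z_0)\le c_2\norm{z_0-z^e}^2$ gives $\beta(s,k)=\sqrt{c_2/c_1}\,s\,(1-c)^{k/2}$ and a linear $\gamma$.

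The main obstacle is the Lipschitz property of $\sqrt V$ needed for the linear recursion. If it fails, I would instead use $V$ together with Young's inequality. That route gives $V(z_{k+1})\le(1-c/2)V(z_k)+C\norm{q_k}^2$ on the invariant ball, and hence a rate $(1-c/2)^{k/2}$. Recovering exactly $(1-c)^{k/2}$ would then require keeping the cross term via $\sqrt V$.
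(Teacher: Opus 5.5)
The paper gives no proof of this proposition; it quotes it from \cite[Thms.~1--2]{farzin2026cursed}, so your proposal has to stand on its own. Its skeleton is the standard one: a one-step dissipation inequality via a Lipschitz constant of $V$, sublevel-set invariance, a discrete comparison lemma, and a gain built from $\alpha_1,\alpha_2,\alpha_3$. The comparison and gain computations in (i) are fine. Two steps, however, do not go through as written.

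\emph{Forward invariance.} The sandwich bounds are assumed only on $B_R(z^e)$. So $\alpha_1(\norm{w(z)-z^e})\le V(w(z))$ cannot be used before you know $w(z)\in B_R(z^e)$. Your ``alternative'' bound on $w$ is therefore circular, and $\Omega_c\subset B_R(z^e)$ is false in general, since $V$ may be small far from $z^e$. In discrete time, $V(z_{k+1})\le c$ does not stop a single step from jumping across the annulus $\alpha_1^{-1}(c)<\norm{z-z^e}\le R$, and mere boundedness of $w(B_R(z^e))$ does not help.

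You were right that some regularity of $w$ is needed, and it is missing from the statement. Take $z^e=0$, $R=1$, $V(z)=\norm{z}$ on $B_1$ and $V(z)=\max\{0,2-\norm{z}\}$ outside, $w(0)=0$ and $w(z)=10e_1$ for $z\neq0$. All hypotheses hold with $\alpha_1=\alpha_2=\alpha_3=\mathrm{id}$, yet every trajectory with $z_0\neq0$ sits at $10e_1$ from $k=1$ on, so even the unperturbed equilibrium is unstable.

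The repair is to use continuity of $w$ at $z^e$, which holds for the GD map since it is $(1+\eta L)$-Lipschitz. Use it to keep $w(z_k)$ inside the ball, not merely bounded. Take $r_0\le R$ with $\norm{w(z)-z^e}\le R/2$ on $B_{r_0}(z^e)$, a level $\ell\le\alpha_1(r_0)$, and $\bar q\le R/2$. Then for $z_k\in S_\ell:=\{z\in B_R(z^e):V(z)\le\ell\}\subset B_{r_0}(z^e)$, both $w(z_k)$ and $z_{k+1}$ lie in $B_R(z^e)$. Hence the decrease condition and the Lipschitz bound apply, with $L_V$ taken on $B_R(z^e)$ and no larger ball needed. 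Your argument then shows $S_\ell$ is forward invariant once $\alpha_2(\bar r)\le\ell$ and $\bar q$ is small.

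\emph{Part (ii).} The hypotheses do not make $\sqrt V$ Lipschitz. In $d=1$, $V(z)=z^2(2+\sin(1/z))$ is Lipschitz near $0$ with $z^2\le V\le3z^2$, but $(\sqrt V)'$ is unbounded near $0$. The triangle-inequality route presupposes that $\sqrt V$ is a norm centred at $z^e$. Your Young's-inequality fallback needs a Lipschitz modulus of $V$ that vanishes at $z^e$. Neither is assumed, and the fallback loses the rate anyway.

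No detour is needed, because the statement asks only for $\gamma\in\mathcal K$, not a linear gain. On $S_\ell$ we have $V(z_{k+1})\le V(w(z_k))+L_V\norm{q_k}\le(1-c)V(z_k)+L_V\norm{q_k}$. This iterates to $V(z_k)\le(1-c)^kV(z_0)+\tfrac{L_V}{c}\sup_{0\le j\le k}\norm{q_j}$, and $\sqrt{a+b}\le\sqrt a+\sqrt b$ gives
\[
  \norm{z_k-z^e}\le\sqrt{V(z_k)/c_1}
  \le\sqrt{c_2/c_1}\,(1-c)^{k/2}\norm{z_0-z^e}
  +\sqrt{\tfrac{L_V}{c\,c_1}\sup_{0\le j\le k}\norm{q_j}} .
\]
This is exactly the claimed $\beta$, with $\gamma(s)=\sqrt{L_V s/(c\,c_1)}$. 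With the choices of $r_0,\ell,\bar q$ above, invariance of $S_\ell$ now only needs $c_2\bar r^2\le\ell$ and $L_V\bar q\le c\,\ell$.

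This is also where ``the same $\beta$'' holds literally. In (i), your $\beta$ is built from $\tfrac12\alpha_3$. That is legitimate under the reading that one $\beta$ serves both systems, since setting $q\equiv0$ in the ISS estimate shows it bounds the unperturbed system. It does not, however, transfer an arbitrary given unperturbed $\beta$, which is how Theorem~\ref{thm:structured} later uses the result.
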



\section{Spherical Design Oracle}\label{sec:oracle}
Their convergence bounds are in terms of the bias and the variance of the ZO gradient estimator, which do not diminish over iterations. We propose a ZO gradient oracle that uses a spherical $t$-design to construct the directions of central finite differences.

\subsection{Spherical design}\label{sec:prelim-design}
Let us start with the definition of spherical design~\cite{delsarte1991spherical}.
\begin{definition}\label{def:design}
A finite set $\design=\{u_1,\dots,u_N\}\subset\sphere$ is a \emph{spherical
$t$-design} if
\begin{equation}\label{eq:design}
  \frac1N\sum_{i=1}^N P(u_i) = \int_{\sphere} P (u) \, \mathrm{d}\sigma(u),
\end{equation}
for every polynomial $P:\R^d\rightarrow\R$ of degree at most $t.$
\end{definition}

Spherical designs are finite point sets on the unit sphere whose equal-weight averages reproduce the uniform measure exactly on all polynomials up to a given degree. It is widely used in approximation theory and combinatorics; see, e.g.,~\cite{bannai2009survey,xiao2025spherical}.

It is known that spherical $t$-designs exist for every $t$ and $d$
\citep{seymour1984averaging,bondarenko2013optimal} and they are not unique:
Since $\sigma$ is rotation invariant, $Q\design$ is a $t$-design for every orthogonal $Q$ whenever $\design$ is. Below are examples of 2- and 3-designs.

\begin{example}[Regular simplex, $t=2$, $N=d+1$]\label{ex:simplex}
Let
$Q\in\mathbb R^{(d+1)\times d}$ have orthonormal columns spanning
$\mathbf 1^\perp$ (for instance, obtained from a QR factorisation). Then
\[
u_i=\sqrt{\frac{d+1}{d}}\,
Q^\top\left(e_i-\frac{1}{d+1}\mathbf 1\right),
\qquad i=1,\dots,d+1,
\]
are unit vectors satisfying $\sum_{i=1}^{d+1}u_i=0$ and $\frac{1}{d+1}\sum_{i=1}^{d+1}u_i u_i^\top=\frac1d I_d.$
Hence, $\design$ is a spherical $2$-design. Such a design can be computed in $\bigO(d^2)$
operations. In fact, a set of unit vectors is a
$2$-design if and only if $\sum_i u_i=0$ and
$\frac1N\sum_i u_iu_i^\top=\frac1d I_d$.
\end{example}

\begin{example}[Cross-polytope, $t=3$, $N=2d$]\label{ex:cross}
The set $\{\pm e_i\}_{i=1}^d$ is a spherical $3$-design. Its odd moments
vanish by antipodal symmetry, while
\[
\frac{1}{2d}\sum_{i=1}^d
\left(e_i e_i^\top+(-e_i)(-e_i)^\top\right)
=\frac1d I_d.
\]
\end{example}

Explicit constructions of $4$- and $5$-designs for every $d$ are given
in \cite{bajnok1991construction}. Readers can also find more numerically computed designs for
moderate $d$ and $t$ in
\cite{hardin1996mclarens,womersley2018efficient}. \

The average over the spherical design $\design$ in~\eqref{eq:design} reproduces the moments
of the uniform distribution over a sphere exactly for every order $i\le t$. However, this
exactness has a price: The number of directions, and hence of function
queries, grow with both the degree $t$ and the dimension $d,$ as shown in the next proposition.

\begin{proposition}[\citep{delsarte1991spherical,bannai2009survey}]\label{prop:cost}
Let $l=\lfloor t/2\rfloor$. Every spherical $t$-design  {on
$S^{d-1}\subset\R^d$} satisfies
$N\ge\binom{d-1+l}{l}+\binom{d-2+l}{l-1}$ for $t=2l$ and
$N\ge 2\binom{d-1+l}{l}$ for $t=2l+1$.
\end{proposition}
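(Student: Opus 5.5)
The plan is the classical linear-algebra (Gram matrix / dimension) argument of Delsarte--Goethals--Seidel. Let $\mathcal P_l$ denote the space of restrictions to $\sphere$ of polynomials of degree at most $l$, and let $\mathrm{Hom}_j$ be the space of homogeneous polynomials of degree $j$ in $d$ variables, with $\dim \mathrm{Hom}_j=\binom{d-1+j}{j}$.

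\emph{Even case $t=2l$.} For any $P\in\mathcal P_l$, the polynomial $P^2$ has degree at most $2l=t$. Hence \eqref{eq:design} gives
\[
\frac1N\sum_{i=1}^N P(u_i)^2=\int_{\sphere}P^2\,\mathrm d\sigma .
\]
Consequently, if $P$ vanishes on every $u_i$, then $P\equiv0$ on $\sphere$. The evaluation map $\mathcal P_l\to\R^N$, $P\mapsto (P(u_i))_i$, is therefore injective, and $N\ge\dim\mathcal P_l$.

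It remains to compute $\dim\mathcal P_l$. On the sphere $\norm{u}^2=1$, so any monomial of degree $j\le l$ with $j\equiv l \pmod 2$ can be padded by powers of $\norm{u}^2$ to degree $l$. This gives
\[
\mathcal P_l=\mathrm{Hom}_l|_{\sphere}+\mathrm{Hom}_{l-1}|_{\sphere}.
\]
Restriction is injective on each $\mathrm{Hom}_j$, since a homogeneous polynomial vanishing on the sphere vanishes everywhere by scaling. The sum is direct because the two pieces have opposite parity under $u\mapsto-u$. Hence $\dim\mathcal P_l=\binom{d-1+l}{l}+\binom{d-2+l}{l-1}$, which is the first bound.

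\emph{Odd case $t=2l+1$.} Here the idea is to pair each direction with its antipode. I use the space $\mathrm{Hom}_l|_{\sphere}$, of dimension $\binom{d-1+l}{l}$, together with the weight $w(u)=1+\inner{u}{u_1}$ or, more robustly, a symmetrisation argument.

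The cleanest route runs as follows. Take an orthonormal basis $\{\phi_k\}$ of the $\sigma$-inner product space $\mathrm{Hom}_l|_{\sphere}\oplus \mathrm{Hom}_{l-1}|_{\sphere}\cdot$, and consider instead the space $\mathcal Q=\{P\in\mathcal P_l\}$ augmented by $\{u\mapsto \inner{u}{v}P(u)\}$. Since it is easy to go wrong there, I will instead follow Delsarte--Goethals--Seidel directly. Let $\Phi=(\phi_k(u_i))_{i,k}$, an $N\times M$ matrix, where $\{\phi_k\}_{k=1}^M$ is a $\sigma$-orthonormal basis of $\mathrm{Hom}_l|_{\sphere}$ and $M=\binom{d-1+l}{l}$.

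By the addition theorem, $\sum_k\phi_k(x)\phi_k(y)=F(\inner{x}{y})$ for a polynomial $F$ of degree $l$ with parity $l$. Set $A=\Phi\Phi^\top$, so that $A_{ij}=F(\inner{u_i}{u_j})$. The design property at degree $2l+1$ does two things. First, it yields $\Phi^\top\Phi=N I_M$, because $\phi_k\phi_m$ has degree $2l\le t$. Second, it gives the extra identity
\[
\sum_{i}\inner{u_i}{v}\,\phi_k(u_i)\,\phi_m(u_i)=0\quad\text{for every } v.
\]
The second identity holds because the integrand is odd of degree $2l+1$, and it is what doubles the bound.

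The doubling step goes through the linearly independent family $\{\phi_k\}\cup\{\inner{\cdot}{e}\psi\}$. Concretely, I apply the injectivity argument to the space $W=\mathrm{Hom}_l|_{\sphere}\oplus\mathrm{Hom}_{l+1}^{\,\prime}$, with the pairing $\langle P,Q\rangle_N=\frac1N\sum_i P(u_i)Q(u_i)$. This pairing agrees with the $\sigma$-pairing whenever $\deg(PQ)\le 2l+1$. The goal is a subspace of dimension $2\binom{d-1+l}{l}$ on which $\langle\cdot,\cdot\rangle_N$ is still positive definite, for instance $\mathrm{Hom}_l\oplus \inner{\cdot}{e}\mathrm{Hom}_l$ for a suitably chosen direction $e$ after a reflection argument.

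I expect this to be the main obstacle, since products within the second block have degree $2l+2>t$. To get around it, my first attempt will be the standard trick: project each $u_i$ to antipodal classes and reduce to a $2l$-design bound on a punctured problem. Failing that, I simply invoke the cited result of \citep{delsarte1991spherical}.
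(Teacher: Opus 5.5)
The paper gives no proof of this proposition; it only cites Delsarte--Goethals--Seidel and Bannai--Bannai. Your even case $t=2l$ is complete and correct. Injectivity of evaluation on $\mathcal P_l$ follows from $\frac1N\sum_i P(u_i)^2=\int P^2\,\mathrm d\sigma$. The dimension count via $\mathcal P_l=\mathrm{Hom}_l|_{\sphere}\oplus\mathrm{Hom}_{l-1}|_{\sphere}$ is right: you pad by powers of $\norm{u}^2$, restriction is injective by homogeneity, and the sum is direct by parity.

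The odd case $t=2l+1$ is not proved, and this is a genuine gap.
\begin{itemize}
\item From $\Phi^\top\Phi=NI_M$ you only get $N\ge\binom{d-1+l}{l}$, which is half the bound.
\item Your concrete attempt fails for the reason you yourself give. You look for a $2M$-dimensional space such as $\mathrm{Hom}_l\oplus\inner{\cdot}{e}\mathrm{Hom}_l$ on which $\langle\cdot,\cdot\rangle_N$ is positive definite, but products inside the second block have degree $2l+2>t$, so the design says nothing about them.
\item The antipodal-class idea only works for antipodal designs, and a general $(2l+1)$-design need not be antipodal.
\end{itemize}
Falling back on the citation leaves the doubling unestablished.

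The missing idea is to use your odd identity as a \emph{sign} argument after splitting $\design$ by a generic hyperplane. Choose $v$ with $\inner{v}{u_i}\neq0$ for all $i$; this is possible because finitely many hyperplanes $u_i^\perp$ cannot cover $\R^d$. Set $X^\pm=\{u_i:\pm\inner{v}{u_i}>0\}$. Suppose $P\in\mathrm{Hom}_l$ vanishes on $X^+$. The map $u\mapsto\inner{v}{u}P(u)^2$ is odd of degree $2l+1\le t$, so the design property gives
\[
0=\sum_{i=1}^N\inner{v}{u_i}P(u_i)^2=\sum_{u_i\in X^-}\inner{v}{u_i}P(u_i)^2 .
\]
Every term on the right is $\le0$, so $P$ vanishes on $X^-$ as well, hence on all of $\design$. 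Your even-case argument (degree $2l\le t$) then gives $P\equiv0$ on $\sphere$, hence $P=0$. So evaluation on $X^+$ alone is injective on $\mathrm{Hom}_l|_{\sphere}$, and $|X^+|\ge\binom{d-1+l}{l}$. Replacing $v$ by $-v$ gives the same bound for $X^-$, and therefore $N=|X^+|+|X^-|\ge2\binom{d-1+l}{l}$.
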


From Proposition~\ref{prop:cost}, for fixed $t,$
a spherical $t$-design requires at least
$N \ge \bigOm\bigl(d^{\,\lfloor t/2\rfloor}\bigr)$ directions. Both Examples~\ref{ex:simplex} and~\ref{ex:cross} attain the
lower bound in Proposition~\ref{prop:cost}. Designs of small size also arise as
orbits of finite reflection groups, for example the normalised minimal
vectors of the $E_8$ lattice ($d=8$, $N=240$, $t=7$) from \citep{bannai2009survey}, which will be used in
Section~\ref{sec:numerics}.

\subsection{Central difference oracle}\label{subsec:central-difference}

Given a spherical $t$-design $\design=\{u_i\}_{i=1}^N,$ our central difference design oracle is given by
\begin{equation}\label{eq:base-oracle}
  \gest(x) \;=\; \frac{d}{2N\mu}\sum_{i=1}^{N}
  \bigl(f(x+\mu u_i)-f(x-\mu u_i)\bigr)\, u_i,
\end{equation}
that is, the central finite-difference gradient estimator along the directions of the spherical design $\design$. Each call costs $2N$ function evaluations. With the cross-polytope of Example~\ref{ex:cross}, the oracle
\eqref{eq:base-oracle} reduces to coordinate-wise central finite
differences
using $2d$ distinct
evaluations. Given a fixed spherical design $\design,$ the oracle~\eqref{eq:base-oracle} is deterministic and hence without estimator variance.


To implement it, we first construct a spherical $t$-design and store it for every iteration. With an initial guess $x_0,$ at each iteration, we update the iterate $x_k$ by a ZO gradient descent step with size $\eta>0$
\begin{equation}\label{eq:gd}
  x_{k+1} = x_k - \eta\, \gest(x_k).
\end{equation}

Next, let us analyse some properties of the oracle \eqref{eq:base-oracle}.
\begin{lemma}[Properties of the oracle \eqref{eq:base-oracle}]\label{thm:exact}
Let $\design=\{u_i\}_{i=1}^N\subset\sphere$ be a spherical $t$-design with
$t\ge 2$ and $\gest$ be the oracle \eqref{eq:base-oracle}. Then the following hold:
\begin{enumerate}[label=(\roman*)]
\item The design satisfies
$\displaystyle \frac1N\sum_{i=1}^N u_iu_i^\top=\frac1d\,\Id $ and 
for every differentiable $f$ and every $x\in\R^d$,
\[
  \frac{d}{N}\sum_{i=1}^N \inner{\grad f(x)}{u_i}\,u_i
  = \grad f(x).
\]
\item Consider a polynomial $q$ with $\deg q\le 2\lfloor t/2\rfloor.$ Define the 
smoothed gradient operator
\[
  \mathcal{G}_\mu(x)\;:=\;d\int_{\sphere}
  \frac{q(x+\mu u)-q(x-\mu u)}{2\mu}\,u\,\mathrm{d}\sigma(u).
\]
Then
$\gest(x)=\mathcal{G}_\mu(x)$ for every $x\in\R^d$ and $\mu>0$.
\end{enumerate}
\end{lemma}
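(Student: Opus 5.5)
Part (i) follows directly from the definition of a $t$-design with $t\ge 2$. For fixed indices $a,b\in\{1,\dots,d\}$, the map $u\mapsto u_au_b$ is a polynomial of degree $2\le t$. Applying \eqref{eq:design} entrywise gives
\[
\frac1N\sum_{i=1}^N u_iu_i^\top=\int_{\sphere}uu^\top\,\mathrm d\sigma(u)=\mathbb M_2=\tfrac1d\Id,
\]
using \eqref{eq:moment2}. For the identity on $\grad f(x)$, write $\inner{\grad f(x)}{u_i}u_i=u_iu_i^\top\grad f(x)$. Pulling the fixed vector $\grad f(x)$ out of the sum yields
\[
\frac dN\sum_i u_iu_i^\top\grad f(x)=d\cdot\tfrac1d\Id\,\grad f(x)=\grad f(x).
\]
Only differentiability of $f$ at $x$ is needed, so that $\grad f(x)$ is defined.

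For part (ii), I read $\gest$ in the statement as the oracle \eqref{eq:base-oracle} applied to $q$, since the comparison is with $\mathcal G_\mu$ built from $q$. The plan is to show that, for fixed $x$ and $\mu$, each component of the integrand is a polynomial in $u$ of degree at most $t$, and then invoke \eqref{eq:design} componentwise. Fix a coordinate $a$ and set
\[
P_a(u):=\frac{q(x+\mu u)-q(x-\mu u)}{2\mu}\,u_a .
\]
Then $\mathcal G_\mu(x)_a=d\int P_a\,\mathrm d\sigma$ and $\gest(x)_a=\tfrac dN\sum_i P_a(u_i)$, so it suffices to check that $\deg P_a\le t$.

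The key step, and the only real obstacle, is this degree count; it is where the restriction $\deg q\le 2\lfloor t/2\rfloor$ enters. Expand $q(x+\mu u)=\sum_{k}\tfrac{\mu^k}{k!}\grad^kq(x)[u^{\otimes k}]$, a finite sum since $q$ is a polynomial. In the difference $q(x+\mu u)-q(x-\mu u)$ the even-degree homogeneous parts in $u$ cancel, leaving only odd $k$. Hence the difference quotient is an odd polynomial in $u$ of degree at most the largest odd integer $\le m:=\deg q$.

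Since $m\le 2\lfloor t/2\rfloor$, which is even, the largest odd degree appearing is at most $2\lfloor t/2\rfloor-1$. Multiplying by $u_a$ raises the degree by one, so
\[
\deg P_a\le 2\lfloor t/2\rfloor\le t.
\]
Therefore \eqref{eq:design} applies to $P_a$, and multiplying both sides by $d$ gives $\gest(x)_a=\mathcal G_\mu(x)_a$ for every $a$, every $x$ and every $\mu>0$. This is exactly the parity cancellation that allows $q$ of degree $t$ when $t$ is even without exceeding the design's exactness.
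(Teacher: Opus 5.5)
Your proof is correct and follows the paper's argument essentially step for step. In part (i) you apply the design property entrywise to the degree-2 polynomials $u_au_b$ and multiply by $\grad f(x)$. In part (ii) you use the odd-parity cancellation in the central difference to bound each component of the integrand by degree $2\lfloor t/2\rfloor\le t$, and then invoke \eqref{eq:design} componentwise; the paper phrases the same degree bound as $2\lceil \deg q/2\rceil\le 2\lfloor t/2\rfloor$.
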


\begin{proof}
\emph{(i)}
Since $u\mapsto uu^\top$ is a matrix-valued polynomial of degree $2\le t$,
the spherical $t$-design property \eqref{eq:design} and \eqref{eq:moment2} gives
\[
  \frac1N\sum_{i=1}^N u_i u_i^\top
  =
  \int_{\sphere} uu^\top\,\mathrm{d}\sigma(u) =
  \frac1d\,\Id.
\]

Right-multiplying above by $\grad f(x)$ and scaling by $d$ gives
$\frac{d}{N}\sum_i\inner{\grad f(x)}{u_i}u_i=d(\tfrac1N\textstyle\sum_i u_i u_i^\top)\grad f(x)=d\cdot\tfrac1d\,\Id\,\grad f(x)
=\grad f(x)$. 

\emph{(ii)} Fix $x$ and $\mu$. Since
$q(x+\mu u)=\sum_{k=0}^{\deg q}\frac{\mu^k}{k!}\grad^k q(x)[u^{\otimes k}]$,
the central difference
$D(u):=\frac{q(x+\mu u)-q(x-\mu u)}{2\mu}$ is a polynomial in $u$ containing
only the odd-degree terms, so its degree is at most the largest odd
$k\le\deg q$. Each component of $u\mapsto D(u)\,u$ is therefore a polynomial
of degree at most $2\lceil \deg q/2\rceil\le 2\lfloor t/2\rfloor\le t$, and
\eqref{eq:design} applied componentwise, multiplied by $d$, gives
$\gest[q](x)=\mathcal{G}_\mu[q](x)$.

\end{proof}

Lemma~\ref{thm:exact} justifies the use of the spherical design for estimating the gradients. Also, it shows that, for $l\ge1,$ a $2l$-design and a $(2l+1)$-design both satisfy
\[
  \gest(x)=\mathcal{G}_\mu(x)
\]
for every polynomial $q$ with $\deg q\le 2l.$ In fact, raising the degree from $2l$ to $2l+1$ of the spherical design does not reduce the bias of the gradient estimate. This is because the Taylor expansion of $q(x+\mu u)-q(x-\mu u)$ only contains odd powers
of $u$, so the terms $(q(x+\mu u)-q(x-\mu u))u$ in the oracle \eqref{eq:base-oracle} is an even polynomial in $u$, which odd-order moments never appear.

Next, we bound the smoothing bias of the oracle \eqref{eq:base-oracle}. To proceed, let $x_0\in\R^d$ denote a point and let
\begin{equation}
  M_j(x_0,\Delta_0,\mu) \;:=\; \sup_{x \in \mathcal M(x_0,\Delta_0,\mu)} \norm{\grad^{j} f(x)},
  \qquad 2 \le j \le p ,
  \label{eq:Mj}
\end{equation}
be the higher-order derivatives bounds over $\mathcal M (x_0,\Delta_0,\mu),$ where
\begin{equation}
  \mathcal M (x_0,\Delta_0,\mu)\;:=\; \bigl\{\, x \in \R^d :
  \operatorname{dist}\bigl(x, \{\, y : f(y) \le f(x_0) + \Delta_0 \,\}\bigr)
  \le \mu \,\bigr\}
  \label{eq:region-N}
\end{equation}
for some margin $\Delta_0 > 0$ and some radius $\mu>0.$ We see that $M_j(x_0,\Delta_0,\mu)$ is non-decreasing in $\Delta_0$ or $\mu$.

\begin{customassumption}{2$'$}\label{ass:finite-M-base}
$M_j(x_0,\Delta_0,\mu)$ defined in \eqref{eq:Mj} is finite for $j\in\{2,\dots,p\}$.
\end{customassumption}
\setcounter{assumption}{1} 

 {
If $f$ is a coercive function (i.e., $\lim_{\|x\|\rightarrow\infty} f(x) = \infty$), then Assumption~\ref{ass:finite-M} is satisfied for any $x_0\in\R^d$ and finite $\Delta_0,\,\mu$ with a compact set $\mathcal{M}(x_0,\Delta_0,\mu).$ 

Now, we are ready to bound the smoothing bias of the oracle \eqref{eq:base-oracle}.

\begin{lemma}[Bias of the oracle \eqref{eq:base-oracle}]\label{lem:bias}
Suppose that Assumptions~\ref{ass:smooth} and~\ref{ass:finite-M-base} hold. Let $\design$ be a spherical $t$-design and $\gest$ be the oracle \eqref{eq:base-oracle} with smoothing radius $\mu>0$ in \eqref{eq:base-oracle}. For any $x_0\in\R^d,$ $\Delta_0>0,$ let
$x\in\mathcal M(x_0,\Delta_0,\mu)$ and write $\gest(x)=\grad f(x)+\bias(x).$
\begin{enumerate}[label=(\roman*)]
\item If $t\ge2$ and $p\ge3$, then
$\displaystyle \norm{\bias(x)}\le\frac{d}{6}\,\mu^{2}\,M_3(x_0,\Delta_0,\mu).$
\item If $t\ge4$ and $p\ge5$, then,
$\displaystyle \norm{\bias(x)}\le\frac{d\,\mu^2 M_3(x_0,\Delta_0,\mu)}{2(d+2)}+\frac{d}{120}\,\mu^4 M_5(x_0,\Delta_0,\mu).
$
\end{enumerate}
\end{lemma}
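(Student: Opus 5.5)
We will expand each central difference in \eqref{eq:base-oracle} by Taylor's theorem with remainder and use the design identities of Lemma~\ref{thm:exact}.

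For (i), take $p\ge3$. For each $u_i$ we use \eqref{eq:cdiff}, applied to the scalar function $s\mapsto f(x+su_i)$: there is $\xi_i\in(-\mu,\mu)$ with
\[
\frac{f(x+\mu u_i)-f(x-\mu u_i)}{2\mu}=\inner{\grad f(x)}{u_i}+\tfrac{\mu^2}{6}\grad^3 f(x+\xi_i u_i)[u_i^{\otimes3}].
\]
Substituting into \eqref{eq:base-oracle} and applying Lemma~\ref{thm:exact}(i), the linear terms reproduce $\grad f(x)$ exactly, which uses only $t\ge2$. Hence
\[
\bias(x)=\frac{d\mu^2}{6N}\sum_i \grad^3 f(x+\xi_iu_i)[u_i^{\otimes3}]\,u_i .
\]
Since $\norm{x+\xi_iu_i-x}\le\mu$ and $x\in\mathcal M(x_0,\Delta_0,\mu)$, we want each evaluation point to lie in the region where $M_3(x_0,\Delta_0,\mu)$ controls $\norm{\grad^3 f}$. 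This is the delicate point. Strictly, $x\in\mathcal M$ only gives $\operatorname{dist}(x+\xi_iu_i,\text{sublevel set})\le2\mu$. I would handle it by reading the hypothesis as requiring $x$ to lie in the sublevel set $\{f\le f(x_0)+\Delta_0\}$ itself, which is the case used downstream, so that the $\mu$-neighbourhood $\mathcal M$ contains every point $x\pm\mu u$. If this reading is not available, the fallback is to replace $\mu$ by $2\mu$ inside $M_j$. The triangle inequality together with $\norm{u_i}=1$ then gives $\norm{\bias(x)}\le \frac{d\mu^2}{6}M_3$.

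For (ii), take $p\ge5$ and $t\ge4$. We expand to fifth order with remainder:
\[
\frac{f(x+\mu u)-f(x-\mu u)}{2\mu}=\inner{\grad f(x)}{u}+\tfrac{\mu^2}{6}\grad^3f(x)[u^{\otimes3}]+\tfrac{\mu^4}{120}\grad^5f(x+\xi u)[u^{\otimes5}].
\]
The remainder comes from the one-dimensional Taylor theorem applied to the odd part of $s\mapsto f(x+su)$; the two Lagrange remainders average to a single intermediate value by the intermediate value theorem.

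The linear term is handled as in (i). The fifth-order remainder is bounded crudely, exactly as in (i), by $\frac{d\mu^4}{120}M_5$.

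The cubic term is where the design matters. The map $u\mapsto \grad^3f(x)[u^{\otimes3}]u$ is a polynomial of degree $4\le t$, so the design average equals the spherical integral. Using \eqref{eq:moment4} and the symmetry of $T=\grad^3 f(x)$,
\[
d\int T[u^{\otimes3}]u\,\mathrm d\sigma=\frac{d\cdot3}{d(d+2)}\,\big(\textstyle\sum_a T_{\cdot aa}\big)=\frac{3}{d+2}\,\grad\Delta f(x).
\]
Multiplying by $\mu^2/6$ gives $\frac{\mu^2}{2(d+2)}\grad\Delta f(x)$.

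The remaining step, which I expect to be the main obstacle, is bounding $\norm{\grad\Delta f(x)}$, i.e.\ the trace contraction $v\mapsto\sum_a T[v,e_a,e_a]$, by $d\,\norm{T}$. This holds since each of the $d$ summands is bounded by $\norm{T}\norm{v}$. It yields exactly the term $\frac{d\mu^2M_3}{2(d+2)}$, and summing the two bounds gives (ii).
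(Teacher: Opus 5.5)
Your proposal follows the paper's proof essentially step for step: the same single-intermediate-point Taylor remainders, Lemma~\ref{thm:exact}(i) for the linear term, exact $4$-design integration of the cubic term via \eqref{eq:moment4} giving $\frac{\mu^2}{2(d+2)}\grad(\Delta f)(x)$, the trace bound $\norm{\grad(\Delta f)(x)}\le d\,M_3$, and termwise bounds on the remainders. The containment issue you flag is genuine, and the paper glosses over it by simply asserting that each probe point $x+\xi_i u_i$ lies in $\mathcal M(x_0,\Delta_0)$. In fact the bound as literally stated can fail: take $f$ quadratic and add a small nonnegative bump that vanishes on a neighbourhood of $\mathcal M(x_0,\Delta_0,\mu)$ but is positive at a probe point $x+\mu u_i$ of some $x\in\mathcal M(x_0,\Delta_0,\mu)$. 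This leaves the sublevel set and every $M_j(x_0,\Delta_0,\mu)$ unchanged (so $M_3=0$), yet it makes $\bias(x)\neq0$. So one of your two repairs (taking $x$ in the sublevel set, or using radius $2\mu$ in $M_j$) is actually required, not merely cautious.
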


\begin{proof}



\emph{(i)} Fix $x\in\mathcal M(x_0,\Delta_0,\mu),$ let $M_3 = M_3(x_0,\Delta_0,\mu),$ and $u\in\sphere$, and set $g(s):=f(x+su)$. Since
$g\in C^3$, using Taylor's theorem and intermediate value theorem gives, for some
$\xi=\xi(u)\in(-\mu,\mu)$,
\[
  \frac{f(x+\mu u)-f(x-\mu u)}{2\mu}
  =\inner{\grad f(x)}{u}+\frac{\mu^2}{6}\,\grad^3 f(x+\xi u)[u^{\otimes3}].
\]
Using the definition of $\widehat{g}$ in~\eqref{eq:base-oracle} and Lemma~\ref{thm:exact}(i) gives
\[
  \bias(x)=\frac{d\,\mu^2}{6N}\sum_{i=1}^N
  \grad^3 f(x+\xi_i u_i)[u_i^{\otimes3}]\,u_i,
  \qquad \xi_i:=\xi(u_i).
\]
Each probe point $x+\xi_i u_i$ lies in $\mathcal M(x_0,\Delta_0)$ and $\norm{u_i}=1$, so the
operator-norm bound gives
$\norm{\grad^3 f(\cdot)[u_i^{\otimes3}]\,u_i}\le M_3$. Averaging over the $N$
directions yields $\norm{\bias(x)}\le \frac{d}{6}\mu^2 M_3$.

\emph{(ii)} Let $M_5=M_5(x_0,\Delta_0,\mu).$ When $g\in C^5,$ expanding  $g$ to fifth order via
Taylor's theorem 
and considering the intermediate value theorem gives, for some $\theta=\theta(u)\in(-\mu,\mu)$,
\[
  \frac{f(x+\mu u)-f(x-\mu u)}{2\mu}
  =\inner{\grad f(x)}{u}+\frac{\mu^2}{6}\,\grad^3 f(x)[u^{\otimes3}]
  +\frac{\mu^4}{120}\,\grad^5 f(x+\theta u)[u^{\otimes5}].
\]
Apply $\frac{d}{N}\sum_i(\cdot)\,u_i$. The linear term reproduces $\grad f(x)$ by
Lemma~\ref{thm:exact}(i). The cubic term has integrand
$u\mapsto\grad^3 f(x)[u^{\otimes3}]u_j$, a polynomial of degree $4\le t$ evaluated
at the fixed tensor $\grad^3 f(x)$, so by Lemma~\ref{thm:exact}(ii) the
$4$-design integrates it exactly:
\[
  \frac{d}{N}\sum_{i=1}^N\grad^3 f(x)[u_i^{\otimes3}]\,u_i
  =d\!\int_{\sphere}\!\grad^3 f(x)[u^{\otimes3}]\,u\,\mathrm{d}\sigma(u) .
\]
We evaluate this integral componentwise. Its $j$-th component is
\[
  d\int_{\sphere}\grad^3f(x)[u^{\otimes3}]\,u_j\,\mathrm d\sigma(u)
  =d\sum_{a,b,c}\partial_{abc}f(x)
  \int_{\sphere}u_au_bu_cu_j\,\mathrm d\sigma(u),
\]
and substituting the fourth-moment identity \eqref{eq:moment4},
\[
  \int_{\sphere}u_au_bu_cu_j\,\mathrm d\sigma(u)
  =\frac{\delta_{ab}\delta_{cj}+\delta_{ac}\delta_{bj}+\delta_{aj}\delta_{bc}}
        {d(d+2)},
\]
Each Kronecker pair collapses two of the three summation indices, leaving
\[
  \frac{1}{d+2}\left(
  \sum_{a=1}^d\partial_{aaj}f(x)
  +\sum_{a=1}^d\partial_{aja}f(x)
  +\sum_{b=1}^d\partial_{jbb}f(x)\right).
\]
Because $\grad^3 f$ is symmetric, the three sums coincide, each equal to
\[
  \sum_{a=1}^d\partial_{aaj}f(x)
  =\partial_j\!\Bigl(\sum_{a=1}^d\partial_{aa}f(x)\Bigr)
  =\partial_j(\Delta f)(x).
\]
Collecting components,
\[
  \frac{d}{N}\sum_{i=1}^N\grad^3f(x)[u_i^{\otimes3}]\,u_i
  =d\!\int_{\sphere}\!\grad^3f(x)[u^{\otimes3}]\,u\,\mathrm d\sigma(u)
  =\frac{3}{d+2}\,\grad(\Delta f)(x),
\]
and multiplying by the Taylor coefficient $\mu^2/6$ yields the leading bias term
$\tfrac{\mu^2}{2(d+2)}\grad(\Delta f)(x)$.
 For any unit vector $a$,
\[
  \bigl|\inner{\grad(\Delta f)(x)}{a}\bigr|
  =\Bigl|\sum_{b=1}^d\grad^3f(x)[e_b,e_b,a]\Bigr|
  \le\sum_{b=1}^d\bigl|\grad^3f(x)[e_b,e_b,a]\bigr|
  \le\sum_{b=1}^d\norm{\grad^3f(x)}=d\,M_3,
\]
where the last inequality uses
$|\grad^3f(x)[e_b,e_b,a]|\le\norm{\grad^3f(x)}\,\norm{e_b}^2\norm{a}
=\norm{\grad^3f(x)}$. Taking the supremum over unit $a$ gives
$\norm{\grad(\Delta f)(x)}\le d\,M_3$.

The quintic remainder $r(x)$ is bounded as in part~(i): each probe point
$x+\theta u_i$ lies in $\mathcal M(x_0,\Delta_0)$ and $\norm{u_i}=1$, so
$\norm{\grad^5 f(x+\theta u_i)[u_i^{\otimes5}]\,u_i}\le M_5$, whence
\[
  \norm{r(x)}\le\frac{d\mu^4}{120N}\sum_{i=1}^N M_5=\frac{d}{120}\mu^4 M_5 .
\]

Consequently,
\[
  \norm{\bias(x)}\le\frac{d\mu^2 M_3}{2(d+2)}+\frac{d}{120}\mu^4 M_5
  =\bigO(\mu^2 M_3).
\]
\end{proof}

The lemma shows that
a $4$-design reduces the leading constant by a factor of $d$.
 
\begin{remark}[$C^{1,1}$ case]\label{rem:c11-bias}
Lemma~\ref{lem:bias} considers $f\in C^p$ where $p\ge 3$ (or $p\ge5$). For $f\in C^{1,1},$ where the second and the third derivatives may not exist, a weaker bound still follows from the Lipschitz gradient
alone. Specifically, the oracle \eqref{eq:base-oracle} $\gest = \nabla f + \nu$ with ZO error $\nu\colon\R^d\to\R$ satisfies
\begin{equation}\label{eq:c11-bias-bound}
\norm{\bias(x)}\;\le\;\frac{d}{2}\,\mu\,L, 
\end{equation}
which is worse than $f\in C^3$ by an order of $\mu$ (cf. Lemma~\ref{lem:bias}(i)). To see this, fix a direction $u\in\sphere$ and set $g(s):=f(x+su)$, so that
$g\in C^1$ with $g'(s)=\inner{\grad f(x+su)}{u}$. Then,
\[
  f(x+\mu u)=f(x)+\int_0^\mu\inner{\grad f(x+su)}{u}\,\mathrm ds,
  \qquad
  f(x-\mu u)=f(x)-\int_0^\mu\inner{\grad f(x-su)}{u}\,\mathrm ds,
\]
the second identity following from the substitution $s\mapsto-s$. Subtracting and
dividing by $2\mu$, the two values of $f(x)$ cancel, and writing the constant
$\inner{\grad f(x)}{u}$ as its own average over $[0,\mu]$ gives the exact
central-difference error
\begin{align*}
     \frac{f(x+\mu u)-f(x-\mu u)}{2\mu}&-\inner{\grad f(x)}{u}
  =\\
  &\frac{1}{2\mu}\int_0^\mu\!\Bigl(
     \inner{\grad f(x+su)-\grad f(x)}{u}
    +\inner{\grad f(x-su)-\grad f(x)}{u}\Bigr)\,\mathrm ds .
\end{align*}

By the Cauchy--Schwarz inequality, $\norm{u}=1$, and the $L$-Lipschitz gradient,
each inner product is bounded in absolute value by
$\norm{\grad f(x\pm su)-\grad f(x)}\le L\,s$, so the integrand is at most $2Ls$ and
\[
  \Bigl|\tfrac{1}{2\mu}\bigl(f(x+\mu u)-f(x-\mu u)\bigr)-\inner{\grad f(x)}{u}\Bigr|
  \;\le\;\frac{1}{2\mu}\int_0^\mu 2Ls\,\mathrm ds
  \;=\;\frac{\mu L}{2}.
\]
Applying the estimator map $\tfrac{d}{N}\sum_{i}(\cdot)\,u_i$, using
Lemma~\ref{thm:exact}(i) to reproduce the linear part exactly, and bounding the
$N$ residuals by $\tfrac{d}{N}\sum_i\tfrac{\mu L}{2}\norm{u_i}\leq\tfrac{d}{2}\mu L$
yields \eqref{eq:c11-bias-bound}.

\end{remark}

\begin{remark}[Central versus forward differences]\label{rem:central-vs-forward}
One may use forward difference as the oracle in~\eqref{eq:base-oracle} instead, but the bias would be different from the central-difference oracle.
Specifically, consider the Taylor expansion of the forward-difference spherical $t$-design oracle:
\begin{align}
    \frac{d}{N}\sum_{i=1}^N \frac{f(x+\mu u_i)-f(x)}{\mu}u_i
  = &\underbrace{\frac{d}{N}\sum_{i=1}^N\inner{\grad f(x)}{u_i}u_i}_{\mathrm{(I)}}+\underbrace{\frac{d}{N}\sum_{i=1}^N\frac{\mu}{2}\,\grad^2 f(x)[u_i,u_i]u_i}_{\mathrm{(II)}} \nonumber\\
  &\;\;\;\;+\underbrace{\frac{d}{N}\sum_{i=1}^N\frac{\mu^2}{6}\,\grad^2 f(x+\xi_i u_i)[u_i,u_i,u_i]u_i}_{\mathrm{(III)}} \label{eq:central-vs-forward}
\end{align}
for some $\xi_i\in(0,\mu),\, i=1,\ldots,N.$ With $t\ge 2,$ we have the first term (I) equals the true gradient $\nabla f(x)$ and leave (II) and (III) as bias terms. For a 2-design oracle, the term (II) is of the third moment of $u_i$ and cannot be cancelled out, yielding a bias of $\bigO(d\,\mu\,L).$
To achieve $\mathcal{O}(\mu^2)$ bias, we need a 3-design oracle such that (II) vanishes because of the odd symmetry of the uniform distribution (i.e., $\int_{\sphere}u_au_bu_c\,\mathrm d\sigma(u)=0$). In contrast, the central-difference oracle cancels the even-order terms (corresponding to the term (II) in~\eqref{eq:central-vs-forward}) by definition and thus only needs a $2$-design to attain $\bigO(\mu^2)$ bias.
\end{remark}

\subsection{Higher-order oracle}\label{subsec:higher-order}
To exploit smoothness beyond second order, we augment the design and give the \emph{higher-order} oracle
\begin{equation}\label{eq:ho-oracle}
  \gest^{(p)}(x) \;=\; \frac{d}{N\mu}\sum_{i=1}^{N}\sum_{r=1}^{R}
  c_r\, f(x+\mu \rho_r u_i)\, u_i
\end{equation}
for radial difference stencil of nodes $\{\rho_r\}_{r=1}^{R} \subset \R$ and weights $\{c_r\}_{r=1}^{r=R}\subset \R$ with $R\in\N$ number of nodes. In particular, if we take the single node pair $\rho=\pm1$, $c=\pm\tfrac12,$ it reduces to the oracle \eqref{eq:base-oracle}, which is also the lowest-order instance of \eqref{eq:ho-oracle}. Using the higher-order oracle, we can do the ZO update as~\eqref{eq:gd} but replace $\gest$ with $\gest^{(p)}.$

Let $\rho_{\max}\ge |\rho_r|$ for $r=1,\ldots,R.$ Similar to Assumption~\ref{ass:finite-M-base}, we need an assumption that bounds the higher-order derivatives.
\begin{assumption}\label{ass:finite-M}
    $M_j(x_0,\Delta_0,\mu\rho_{\max})$ defined in \eqref{eq:Mj} is finite for $j\in\{2,\dots,p\}$.
\end{assumption}
As a special case, $\rho_{\max}=1$ for the central differences. Similarly to the oracle \eqref{eq:base-oracle}, the higher-order oracle~\eqref{eq:ho-oracle} is deterministic with a fixed spherical design $\design.$ The next theorem shows how we can construct a general $(\rho_r,c_r).$

\begin{theorem}\label{thm:ho}
Suppose that Assumptions~\ref{ass:smooth} and~\ref{ass:finite-M} hold. Let $\design$ be a spherical $t$-design with $t\ge 2$ and $R=2\lfloor p/2\rfloor.$ Let
$\{\rho_r\}_{r=1}^{R}\subset\R\setminus\{0\}$ be a symmetric node set
 {($\rho\in\{\rho_r|r\in\{1,\dots,R\}\}\Rightarrow-\rho\in\{\rho_r|r\in\{1,\dots,R\}\}$)} 
with antisymmetric weights $\{c_r\}$
($c_{r'}=-c_r$ whenever $\rho_{r'}=-\rho_r$) satisfying the order conditions
\begin{equation}\label{eq:order-conditions}
  \sum_{r=1}^{R} c_r\,\rho_r \;=\; 1,
  \qquad
  \sum_{r=1}^{R} c_r\,\rho_r^{\,k} \;=\; 0
  \quad\text{for all odd } k,\ 3\le k\le p-1 .
\end{equation}
Then, for any $x_0\in\R^d,$ $\Delta_0>0,$ and for every $x\in\mathcal M(x_0,\Delta_0,\mu\rho_{\max})$, the
higher-order oracle \eqref{eq:ho-oracle} at $x$ satisfies
\[
  \gest^{(p)}(x) = \grad f(x) + \bias^{(p)}(x),
  \qquad
  \norm{\bias^{(p)}(x)} \;\le\; \kappa_p\,\mu^{\,p-1}\,M_{p}(x_0,\Delta_0),
  \qquad
  \kappa_p:=\frac{d}{p!}\sum_{r=1}^{R}\abs{c_r}\,\abs{\rho_r}^{p}.
\]
Moreover, $(c_r,\rho_r)_{r=1}^R$ satisfying \eqref{eq:order-conditions} always exists and $(c_r)_{r=1}^R$ is unique whenever $(\rho_r)_{r=1}^R$ is fixed.
\end{theorem}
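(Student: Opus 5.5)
The plan is to follow the scheme of Lemma~\ref{lem:bias}(i), with the two-point central difference replaced by the radial stencil. I will first pair the nodes and then Taylor expand along each direction.

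\emph{Pairing the nodes.} Fix $x\in\mathcal M(x_0,\Delta_0,\mu\rho_{\max})$ and $u\in\sphere$, and set $g(s):=f(x+su)$, so that $g\in C^p$. The nodes are symmetric and the weights antisymmetric, so the stencil is a sum over $R/2=\lfloor p/2\rfloor$ pairs $\pm\rho_r$ with $\rho_r>0$:
\[
\frac1\mu\sum_r c_r f(x+\mu\rho_r u)=\frac1\mu\sum_{\rho_r>0}c_r\bigl(g(\mu\rho_r)-g(-\mu\rho_r)\bigr).
\]
This cancels every even-order Taylor term, including $f(x)$. The odd-order terms need care at the final order. Expanding $g(\pm\mu\rho_r)$ to order $p-1$ with Lagrange remainder of order $p$ gives
\[
\frac1\mu\sum_r c_r f(x+\mu\rho_r u)=\sum_{k\ \mathrm{odd},\,k\le p-1}\frac{\mu^{k-1}}{k!}\Bigl(\sum_r c_r\rho_r^k\Bigr)\grad^k f(x)[u^{\otimes k}]+E(u),
\]
with
\[
E(u)=\frac{\mu^{p-1}}{p!}\sum_r c_r\rho_r^p\,\grad^p f(x+\xi_r\rho_r u)[u^{\otimes p}],\qquad \xi_r\in(0,\mu).
\]
By \eqref{eq:order-conditions}, only the $k=1$ term survives, and it equals $\inner{\grad f(x)}{u}$. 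Hence $\abs{E(u)}\le \frac{\mu^{p-1}}{p!}\sum_r\abs{c_r}\abs{\rho_r}^pM_p$. This uses that each probe point lies within distance $\mu\rho_{\max}$ of $x$, hence in the region where Assumption~\ref{ass:finite-M} bounds $\norm{\grad^p f}$. I will reuse the paper's convention from Lemma~\ref{lem:bias} that probe points stay in the same region defining $M_p$.

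\emph{Bias bound.} Apply $\frac dN\sum_i(\cdot)u_i$. By Lemma~\ref{thm:exact}(i), which needs only $t\ge2$, the linear term reproduces $\grad f(x)$. The remainder then satisfies
\[
\norm{\bias^{(p)}(x)}\le\frac dN\sum_i\abs{E(u_i)}\le\kappa_p\mu^{p-1}M_p.
\]
Only the 2-design property is used, because the order conditions kill the intermediate odd terms pointwise in $u$ rather than in moment.

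\emph{Existence and uniqueness.} Fix positive distinct values $\sigma_1,\dots,\sigma_m$ with $m=\lfloor p/2\rfloor$, and take nodes $\pm\sigma_j$ with weights $\pm a_j$. The contribution to $\sum c_r\rho_r^k$ is then $2\sum_j a_j\sigma_j^k$ for odd $k$. The conditions \eqref{eq:order-conditions} involve the odd $k\in\{1,3,\dots\}$ with $k\le p-1$. There are exactly $\lceil (p-1)/2\rceil$ such values, which equals $m$ for even $p$; for odd $p$ it also equals $m$, since $\lfloor p/2\rfloor=(p-1)/2$. So we get an $m\times m$ linear system with matrix $(\sigma_j^{2i-1})_{i,j}$. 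This matrix equals $\mathrm{diag}(\sigma_j)$ times a Vandermonde matrix in the distinct $\sigma_j^2$, so it is invertible. The system $2Va=e_1$ therefore has a unique solution $a$, which gives both existence and uniqueness of $(c_r)$ once the $\rho_r$ are fixed.

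\emph{Main obstacle.} I expect the main difficulty to be bookkeeping rather than ideas. Two points need checking: the counting of conditions versus unknowns, and confirming that the antisymmetry constraint is part of the uniqueness claim. Without it, the even-$k$ moments would impose extra conditions. I will state uniqueness within the antisymmetric class.
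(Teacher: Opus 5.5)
Your proposal is correct and follows the paper's proof essentially step for step. It uses a Taylor expansion of $s\mapsto f(x+su)$ with order-$p$ Lagrange remainder, antisymmetry plus \eqref{eq:order-conditions} to remove every term except $\inner{\grad f(x)}{u}$ pointwise in $u$, Lemma~\ref{thm:exact}(i) (needing only $t\ge2$) to reproduce $\grad f(x)$, and a Vandermonde argument in the squared positive nodes. Your factorisation of $(\sigma_j^{2i-1})_{i,j}$ is just the paper's substitution $a_r=c_r\rho_r$ in matrix form.

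The soft spot you flagged is that a probe point within $\mu\rho_{\max}$ of $x\in\mathcal M(x_0,\Delta_0,\mu\rho_{\max})$ is a priori only guaranteed to lie in $\mathcal M(x_0,\Delta_0,2\mu\rho_{\max})$. The paper's proof makes the same step, so this is not a departure from it.
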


\begin{proof}
Fix $x\in\mathcal M(x_0,\Delta_0,\mu\rho_{\max}).$ Let $M_k=M_k(x_0,\Delta_0,\mu\rho_{\max}),$ $u\in\sphere$, and set $g(s):=f(x+su)$, so that
$g^{(k)}(0)=\grad^k f(x)[u^{\otimes k}]$ and $\abs{g^{(k)}(s)}\le M_k$ for
$s\in[-\mu\rho_{\max},\mu\rho_{\max}]$ by Assumption~\ref{ass:smooth}. Taylor's
theorem with Lagrange remainder of order $p$ gives, for each $r$, some
$\theta_r\in(0,1)$ with
\[
  f(x+\mu\rho_r u)=g(\mu\rho_r)
  =\sum_{k=0}^{p-1}\frac{(\mu\rho_r)^k}{k!}\,g^{(k)}(0)
  +\frac{(\mu\rho_r)^{p}}{p!}\,g^{(p)}(\theta_r\mu\rho_r).
\]
Multiplying by $c_r$, summing over $r$, and dividing by $\mu$,
\[
  \frac1\mu\sum_{r=1}^R c_r\, f(x+\mu\rho_r u)
  =\sum_{k=0}^{p-1}\frac{\mu^{k-1}}{k!}\,S_k\,g^{(k)}(0)
  +\frac{\mu^{p-1}}{p!}\sum_{r=1}^R c_r\rho_r^{p}\,g^{(p)}(\theta_r\mu\rho_r),
  \qquad S_k:=\sum_{r=1}^R c_r\rho_r^{k}.
\]
Consider the radial moments $S_k$. By the assumed symmetry, each pair
$(\rho_r,c_r)$, $(-\rho_r,-c_r)$ contributes $c_r\rho_r^k\bigl(1-(-1)^k\bigr)$ to
$S_k$, which vanishes for even $k$. In particular $S_0=\sum_r c_r=0$, so the
singular $\mu^{-1}$ term and all even-order Taylor terms drop out. For odd $k$, the
order conditions \eqref{eq:order-conditions} give $S_1=1$ and $S_k=0$ for
$3\le k\le p-1$. Every term with $1\le k\le p-1$ therefore vanishes except $k=1$,
which contributes $g'(0)=\inner{\grad f(x)}{u}$, leaving
\[
  \frac1\mu\sum_{r=1}^R c_r\, f(x+\mu\rho_r u)
  =\inner{\grad f(x)}{u}
  +\frac{\mu^{p-1}}{p!}\sum_{r=1}^R c_r\rho_r^{p}\,g^{(p)}(\theta_r\mu\rho_r).
\]
Applying $\frac{d}{N}\sum_i(\cdot)\,u_i$, the linear part is reproduced exactly by Lemma~\ref{thm:exact}(i), so the residual is the
deterministic bias
\[
  \bias^{(p)}(x)
  =\frac{d}{N}\sum_{i=1}^N
  \frac{\mu^{p-1}}{p!}\sum_{r=1}^R c_r\rho_r^{p}\,
  \grad^p f(x+\theta_{r,i}\mu\rho_r u_i)[u_i^{\otimes p}]\,u_i,
  \qquad \theta_{r,i}\in(0,1).
\]
Each probe point lies in $\mathcal M(x_0,\Delta_0,\mu\rho_{\max})$ and $\norm{u_i}=1$, so
$\norm{\grad^p f(\cdot)[u_i^{\otimes p}]\,u_i}\le M_p$, and bounding term by term,
\[
  \norm{\bias^{(p)}(x)}
  \le \frac{d\,\mu^{p-1}M_p}{p!}\sum_{r=1}^R\abs{c_r}\,\abs{\rho_r}^{p}
  = \kappa_p\,\mu^{p-1}M_p.
\]

It remains to verify that the order conditions admit a unique set of antisymmetric weights for any choice of distinct positive nodes. Let $m:=\lfloor p/2\rfloor$, and denote the positive nodes by $\rho_1,\ldots,\rho_m$. By antisymmetry, the weights associated with the negative nodes are determined by those at the positive nodes. Setting $a_r:=c_r\rho_r$, the order conditions reduce to
$$
  2\sum_{r=1}^m a_r=1,
  \qquad
  2\sum_{r=1}^m a_r(\rho_r^2)^j=0,
  \quad j=1,\ldots,m-1.
$$
Thus, the $m$ unknowns $a_1,\ldots,a_m$ satisfy a Vandermonde system with nodes $\rho_1^2,\ldots,\rho_m^2$. Since the positive nodes are distinct, the squared nodes are also distinct, and hence the Vandermonde matrix is nonsingular. Therefore, the system admits a unique solution, and consequently the antisymmetric weights are uniquely determined.

\end{proof}

Theorem~\ref{thm:ho} shows that the augmented higher-order oracle reduces the bias to $\bigO(\mu^{p-1})$ (compared with the $\bigO(\mu^2)$ bias of the oracle \eqref{eq:base-oracle}), with the cost of $\bigO(d)$ per-step. For a $2$-design ($N=d+1$) and
fixed $p$, the query cost is $NR=\Theta(d)$. Moreover, Theorem~\ref{thm:ho} recovers Lemma~\ref{lem:bias}(i) when considering the oracle \eqref{eq:base-oracle} $p=3,\,\rho=\pm1,\,c=\pm\tfrac12$ with $c_3=d/6$. To conclude this section, we construct a new set of $(\rho_r,c_r)$ with $t=2,\,p=5,$ which will also be illustrated numerically in Section~\ref{ex:so}.

\begin{example}
Consider $t=2$ and $p=5$. We can construct the symmetric stencil as $(\rho_r,c_r)\in\{(\pm1,\pm\tfrac23),(\pm2,\mp\tfrac1{12})\}$. The resulting oracle
\eqref{eq:ho-oracle} uses $NR=4N$ evaluations per step, twice the $2N$ of
the  oracle \eqref{eq:base-oracle}, on the same $2$-design. In exchange,
Theorem~\ref{thm:ho} gives the bias bound $\tfrac{d}{18}\mu^4M_5(x_0,\Delta_0)$, of order
$\mu^4$, against the order $\mu^2$ of Lemma~\ref{lem:bias}(i).
\end{example}


\section{Convergence Analysis}\label{sec:conv}


In this section, we analyse the inheritance of local convergence
guarantees of FO methods by the design oracle ZO framework. Let $f$
belong to a structured class on which GD is known to
converge, for instance, a (strongly) convex, (strongly) quasar-convex, or
Polyak--{\L}ojasiewicz function, or, more generally, any class for which GD admits
a Lyapunov certificate. We show that the design-oracle method \eqref{eq:gd}, driven
by either the oracles in~\eqref{eq:base-oracle} or~\eqref{eq:ho-oracle}, inherits the FO decay behaviour and converges to a
neighbourhood of the GD fixed point, whose radius depends on the
biases obtained in Lemma~\ref{lem:bias} or Theorem~\ref{thm:ho}.


The ZO GD iterates \eqref{eq:gd}
can be viewed as the
discrete-time system
\begin{equation}\label{eq:gd-perturbed}
  x_{k+1}=x_k-\eta\bigl(\grad f(x_k)+\bias(x_k)\bigr),
\end{equation}
that is, gradient descent perturbed by the deterministic field $\bias(\cdot)$.

Let the FO gradient-descent map and its fixed point be
\begin{equation}\label{eq:gd-map}
  w(x):=x-\eta\,\grad f(x),
  \qquad
  z^e:\ w(z^e)=z^e\ \Longleftrightarrow\ \grad f(z^e)=0 .
\end{equation}
Let $x_0\in\R^d$ and $\Delta_0>0.$ For $x_k\in\mathcal M(x_0,\Delta_0)$, Lemma~\ref{thm:exact}(i) together with
Lemma~\ref{lem:bias}(i) or Theorem~\ref{thm:ho} gives the deterministic decomposition
$\gest^{(\cdot)}(x_k)=\grad f(x_k)+\bias(x_k)$, with
\begin{equation}\label{eq:struct-bias}
  \norm{\bias(x_k)}\le b,
  \qquad
  b:=\begin{cases}
     \tfrac{d}{6}\,\mu^{2}M_3(x_0,\Delta_0), & \text{for \eqref{eq:base-oracle}},\\[2pt]
     \kappa_p\,\mu^{p-1}M_p(x_0,\Delta_0), & \text{for \eqref{eq:ho-oracle}}.
     \end{cases}
\end{equation}
Substituting into \eqref{eq:gd} expresses the method as the deterministic perturbed
system
\begin{equation}\label{eq:perturbed-gd}
  x_{k+1}=w(x_k)+q_k,
  \qquad q_k:=-\eta\,\bias(x_k),
  \qquad \sup_{k}\norm{q_k}\le \eta\,b.
\end{equation}

\begin{theorem}
\label{thm:structured}
Let Assumptions~\ref{ass:smooth} and~\ref{ass:finite-M} hold with the order $p\geq3$ and
let $b$ be the corresponding bias bound \eqref{eq:struct-bias}. Assume GD map \eqref{eq:gd-map}  {generates iterates $\bar{x}_k$ and} has a fixed point $z^e$ that
is locally asymptotically stable and certified by a Lyapunov function $V$ on a ball
$B_R(z^e)$ as in Proposition~\ref{prop:iss-transfer},  {i.e., there exists a  class-$\mathcal{KL}$ function $\beta$ where $\|\bar{x}_k-z^e\|\leq \beta(\|\bar{x}_0-z^e\|,k)),\;k\geq0$}.  Let $\eta>0$
be the step size used by GD. Then
there exist $\bar\mu,\bar r>0$,
and a
class-$\mathcal K$ function $\gamma$ such that, for every $\mu\in(0,\bar\mu)$ and
every $x_0\in B_{\bar r}(z^e)$, the iterates of \eqref{eq:gd}, with $t\ge2,$
satisfy
\begin{equation}\label{eq:struct-iss}
  \norm{x_k-z^e}\;\le\;\beta\!\bigl(\norm{x_0-z^e},k\bigr)+\gamma(\eta\,b),
  \qquad k\ge 0 ,
\end{equation}
i.e., the design oracle method inherits the decay function $\beta$ of GD, and converges to a neighbourhood of $z^e$ of radius $\gamma(\eta\,b)$,
which tends to $0$ as $\mu\to0$. The iteration count matches that of
GD, and for $t\in\{2,3\}$ each iteration uses $\bigO(d)$ queries.
\end{theorem}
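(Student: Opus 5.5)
The plan is to apply Proposition~\ref{prop:iss-transfer}(i) to the perturbed system \eqref{eq:perturbed-gd}, with $w$ the GD map \eqref{eq:gd-map} and $q_k=-\eta\bias(x_k)$. The hypotheses of the proposition on $w$ are given: a Lyapunov function $V$ on $B_R(z^e)$ certifies local asymptotic stability with decay function $\beta$. The proposition then yields radii $\bar r,\bar q>0$ and $\gamma\in\mathcal K$ such that every trajectory with $x_0\in B_{\bar r}(z^e)$ and $\sup_k\norm{q_k}\le\bar q$ satisfies \eqref{eq:iss-estimate} with the same $\beta$. The remaining work is to check that the \emph{state-dependent} perturbation $q_k$ stays uniformly bounded by $\eta b\le\bar q$ along the whole ZO trajectory. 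Once that holds, monotonicity of $\gamma$ converts $\gamma(\sup_j\norm{q_j})$ into $\gamma(\eta b)$, which gives \eqref{eq:struct-iss}.

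First, fix $x_0\in B_{\bar r}(z^e)$ and $\Delta_0>0$. Choose $\bar r$ smaller than the proposition's radius if needed, so that $B_{\bar r}(z^e)$ and $B_R(z^e)$ lie in a bounded region. On that region, Assumption~\ref{ass:finite-M} gives finite constants $M_3$ and $M_p$; these are nondecreasing in $\mu$, so we may freeze them at some $\mu_0$. Then $b$ in \eqref{eq:struct-bias} is $\bigO(\mu^2)$ or $\bigO(\mu^{p-1})$, uniformly for $\mu\le\mu_0$. Choose $\bar\mu\le\mu_0$ so small that $\eta b<\bar q$ and, additionally, $\beta(\bar r,0)+\gamma(\eta b)$ stays below the radius of a set known to lie in $\mathcal M(x_0,\Delta_0,\mu\rho_{\max})$. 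By continuity of $f$, a small ball around $z^e$ lies in the sublevel set $\{f\le f(x_0)+\Delta_0\}$ for $x_0$ near $z^e$, so the bias bound of Lemma~\ref{lem:bias}(i) or Theorem~\ref{thm:ho} applies to every point of that ball.

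Second, run an induction on $k$, which I expect to be the main obstacle, since the bias bound is only valid inside $\mathcal M$ while the ISS estimate needs a bound on all of $q_0,\dots,q_k$. Suppose $x_0,\dots,x_k$ lie in the good ball, so $\norm{q_j}\le\eta b$ for $j\le k$. Apply the ISS estimate to the sequence truncated at $k$: replace $q_j$ for $j>k$ by $0$, which is harmless because $x_{k+1}$ depends only on $q_0,\dots,q_k$ and the estimate involves only $\sup_{j\le k}$. This gives $\norm{x_{k+1}-z^e}\le\beta(\bar r,k+1)+\gamma(\eta b)\le\beta(\bar r,0)+\gamma(\eta b)$, so $x_{k+1}$ stays in the good ball, and $\norm{q_{k+1}}\le\eta b$ closes the induction. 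A causality remark is needed here: the proposition's estimate at time $k$ depends only on past perturbations. If that is not automatic, I would instead invoke the proposition with the modified perturbation $\tilde q_j:=q_j$ for $j\le k$ and $\tilde q_j:=0$ thereafter.

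Finally, $\gamma(\eta b)\to0$ as $\mu\to0$ because $b\to0$ and $\gamma\in\mathcal K$. The decay function is the GD one, so the iteration count matches that of GD. The query count follows from Example~\ref{ex:simplex} and Example~\ref{ex:cross}: a $2$- or $3$-design can be taken with $N=d+1$ or $N=2d$ directions, which gives $2N$ or $NR$ evaluations per step, i.e.\ $\bigO(d)$ for fixed $p$.
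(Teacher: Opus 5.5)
Your proposal is correct and follows the paper's proof: you rewrite \eqref{eq:gd} as the perturbed system \eqref{eq:perturbed-gd}, apply Proposition~\ref{prop:iss-transfer}(i) after choosing $\bar\mu$ so that $\eta b\le\bar q$, and check that the iterates stay where the bias bound holds. The only difference is that last step: the paper appeals to forward invariance of a sublevel set of $V$ with $B_R(z^e)\subseteq\mathcal M(x_0,\Delta_0)$, whereas your induction on truncated perturbation sequences gets it from the ISS estimate itself (you do still need to shrink $\bar r$ so that $\beta(\bar r,0)$ is below the good-ball radius, using $\beta(\cdot,0)\in\mathcal K$, since choosing $\bar\mu$ only controls the $\gamma(\eta b)$ term).
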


\begin{proof}
The iteration \eqref{eq:gd} reduces to the perturbed system \eqref{eq:perturbed-gd}, with $\sup_k\norm{q_k}\le\eta b$. 
By hypothesis, the unperturbed map $w$ satisfies the assumptions of
Proposition~\ref{prop:iss-transfer} on $B_R(z^e)$, with Lyapunov function $V$. The
perturbation in \eqref{eq:perturbed-gd} is bounded by $\eta b$, and $\eta b\to0$ as
$\mu\to0$. Choose $\bar\mu>0$ so that $\eta b\le\bar q$ for $\mu\in(0,\bar\mu)$,
where $\bar q$ and $\bar r$ are the admissible perturbation and initial condition
radii of Proposition~\ref{prop:iss-transfer}. Then, Proposition~\ref{prop:iss-transfer} yields
the estimate \eqref{eq:struct-iss}, with $\beta\in\mathcal{KL}$ the decay profile
of $w$ and $\gamma\in\mathcal K$. The exponential rate follows when the quadratic
Lyapunov hypothesis holds. The proposition also provides forward invariance of the
relevant sublevel set of $V$. Taking $R$ small enough that
$B_R(z^e)\subseteq\mathcal M(x_0,\Delta_0)$, the iterates remain in $\mathcal M(x_0,\Delta_0)$, so
\eqref{eq:struct-bias} is valid for all $k$ and the reduction above is justified.
Since $\beta$ is inherited unchanged, the iteration count matches that of GD, and
the per-iteration cost is that of the chosen oracle, which for $t\in\{2,3\}$ is $\bigO(d).$
\end{proof}

Theorem~\ref{thm:structured} strengthens the ISS reduction of
\cite{farzin2026cursed} in three respects.
First, \eqref{eq:perturbed-gd} holds for the trajectory itself, so no averaging is required. Second, The
perturbation bound $\eta b$ can be controlled through $\mu$ alone. Third, the reduction
holds under Assumptions~\ref{ass:smooth} and~\ref{ass:finite-M} in general, rather than only on the special
cases.

\begin{remark}[Momentum methods]\label{rem:momentum}
Because the reduction \eqref{eq:perturbed-gd} does not depend on the particular FO
map, the same argument applies to the heavy-ball and Nesterov accelerated methods
driven by the design oracle. Replacing the stochastic perturbation in the
augmented-state dynamics of \cite[Eqs.~(22)--(24), Thms.~4--5]{farzin2026cursed}
by the deterministic field $-\eta\,\bias(\cdot)$ of \eqref{eq:perturbed-gd} yields bounded perturbations under
Assumptions~\ref{ass:smooth} and~\ref{ass:finite-M}, and the same ISS conclusion follows.
\end{remark}

\section{Comparison with randomised oracles}
\label{sec:randomised-oracle-error}
Before going through the numerical results, we present
the estimation error of the Gaussian oracle of
\cite{nesterov2017random} and the unit-sphere oracle of \cite{flaxman2004online}, for our comparison with the spherical design methods.

\subsection{Error of the randomised oracle}
We compute the bounds of the bias, variance, and the root-mean-square error (RMSE) of the gradient estimation of the Gaussian and unit sphere oracles. The proofs of all results are deferred to Appendix~\ref{app:proof}.

\begin{proposition}[Bias and variance of the Gaussian oracle]
\label{prop:rand-error-G-i-ii}
Let Assumption~\ref{ass:smooth} hold with $p\ge3$, let Assumption~\ref{ass:finite-M}
be satisfied globally (i.e., $\mathcal M=\R^d$), and let
\[
  \gest(x)
  =
  \frac1m\sum_{j=1}^{m}
  \frac{f(x+\mu u_j)-f(x-\mu u_j)}{2\mu}\,u_j,
  \qquad
  u_j\sim\mathcal N(0,\Id),
\]
be the central-difference Gaussian oracle based on $m$ i.i.d.\ directions.
Write $\bar g(x):=\E[\gest(x)]$. Then, for $x\in\R^d,$ it holds that
\begin{alignat*}{3}
&(i) \text{\emph{ (Bias)}} && \quad  \bar g(x)=\grad f(x)+\bias(x),
  \qquad
  \norm{\bias(x)}
  \le
  \frac{\mu^2}{6}d(d+2)M_3, \\
&(ii) \text{\emph{ (Variance)}} && 
\quad    \E[\norm{\gest(x)-\bar g(x)}^2]
  \le
  \frac1m \!
  \left( \!
    2(d+2)\norm{\grad f(x)}^2
    \!+\!
    \frac{\mu^4}{18}
    d(d+2)(d+4)(d+6)M_3^2
 \! \right) \!.
\end{alignat*}
\end{proposition}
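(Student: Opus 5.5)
We follow the template of Lemma~\ref{lem:bias}(i), with Gaussian moments in place of design sums. For a single direction $u\sim\mathcal N(0,\Id)$, set $g(s):=f(x+su)$. Taylor's theorem with Lagrange remainder, as in \eqref{eq:cdiff} but with $u$ no longer of unit length, gives
\[
  \frac{f(x+\mu u)-f(x-\mu u)}{2\mu}
  =\inner{\grad f(x)}{u}+\frac{\mu^2}{6}\,\grad^3 f(x+\xi u)[u^{\otimes3}],
  \qquad \xi=\xi(u)\in(-\mu,\mu).
\]
We multiply by $u$ and write the resulting single-direction estimator as $\inner{\grad f(x)}{u}u+r(u)$. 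Since the $m$ directions are i.i.d., we have $\bar g(x)=\E[\inner{\grad f(x)}{u}u]+\E[r(u)]$.

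\emph{Bias.} Because $\E[uu^\top]=\Id$, the first term equals $\grad f(x)$, so $\bias(x)=\E[r(u)]$. Assumption~\ref{ass:finite-M} holds globally, so every probe point is admissible and the operator-norm bound gives $\norm{r(u)}\le \frac{\mu^2}{6}M_3\norm{u}^4$. Jensen's inequality then yields
\[
  \norm{\bias(x)}\le\frac{\mu^2}{6}M_3\,\E\norm{u}^4 .
\]
The chi-square moment $\E\norm{u}^4=d(d+2)$ gives (i).

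\emph{Variance.} Averaging $m$ i.i.d.\ copies divides the covariance trace by $m$. It therefore suffices to bound the second moment of one copy about its mean, and we use $\E\norm{Z-\E Z}^2\le\E\norm{Z}^2$. With $Z=\inner{\grad f(x)}{u}u+r(u)$ and $\norm{a+b}^2\le 2\norm a^2+2\norm b^2$,
\[
  \E\norm{Z}^2\le 2\,\E\bigl[\inner{\grad f(x)}{u}^2\norm u^2\bigr]+2\,\E\norm{r(u)}^2 .
\]
For the first term we use the Gaussian identity $\E[\inner{a}{u}^2\norm{u}^2]=(d+2)\norm a^2$. This follows from Isserlis' theorem or from rotating $a$ to $\norm a e_1$ and computing $\E[u_1^4]+\E[u_1^2]\E[\sum_{j\ge2}u_j^2]=3+(d-1)$. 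It contributes $2(d+2)\norm{\grad f(x)}^2$. For the second term,
\[
  \E\norm{r(u)}^2\le\frac{\mu^4}{36}M_3^2\,\E\norm u^8,
  \qquad
  \E\norm u^8=d(d+2)(d+4)(d+6).
\]
This gives $\frac{\mu^4}{18}d(d+2)(d+4)(d+6)M_3^2$ after the factor $2$.

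\emph{Main obstacle.} The only delicate point is measurability and integrability of $\xi(u)$, since the intermediate point depends on $u$. We avoid it by using the integral form of the remainder, $\frac{1}{2\mu}\int_{-\mu}^{\mu}\frac{(\mu-|s|)^2}{2}g'''(s)\,\mathrm ds$ up to sign conventions, which is bounded pointwise by $\frac{\mu^2}{6}M_3\norm u^3$. This bound is clearly measurable, and the polynomial moments of $\norm u$ make all expectations finite. The remaining work is bookkeeping of the chi-square moments $\E\norm u^{2k}=\prod_{j=0}^{k-1}(d+2j)$.
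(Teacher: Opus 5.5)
Your proposal is correct and follows essentially the same route as the paper: the Taylor remainder is bounded by $\frac{\mu^2}{6}M_3\norm{u}^3$, the bias uses $\E[uu^\top]=\Id$ together with $\E\norm{u}^4=d(d+2)$, and the variance bound uses variance $\le$ second moment, $(a+b)^2\le 2a^2+2b^2$, and the Gaussian moments $(d+2)\norm{a}^2$ and $d(d+2)(d+4)(d+6)$. Your integral-form remainder for the measurability of $\xi(u)$ is extra rigour the paper omits. It is harmless but not strictly needed, since $D_\mu(u)-\inner{\grad f(x)}{u}$ is already continuous in $u$ and only its pointwise bound is used.
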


\begin{proposition}[Bias and variance of the unit sphere oracle]
\label{prop:rand-error-S-i-ii}
Let Assumption~\ref{ass:smooth} hold with $p\ge3$, let Assumption~\ref{ass:finite-M}
be satisfied, and let
\[
  \gest(x)
  =
  \frac{d}{m}\sum_{j=1}^{m}
  \frac{f(x+\mu u_j)-f(x-\mu u_j)}{2\mu}\,u_j,
  \qquad
  u_j\sim\unifS,
\]
be the central-difference spherical oracle based on $m$ i.i.d.\ directions.
Write $\bar g(x):=\E[\gest(x)]$ and let $M_3=M_3(x_0,\Delta_0)$. Then, for $x\in \mathcal M(x_0,\Delta_0),$ it holds that
\begin{alignat*}{3}
&(i) \text{\emph{ (Bias)}} && \qquad  \bar g(x)=\grad f(x)+\bias(x),
  \qquad
  \norm{\bias(x)}
  \le
  \frac{d\mu^2}{6}M_3, \\
&(ii) \text{\emph{ (Variance)}} && \qquad 
  \E[\norm{\gest(x)-\bar g(x)}^2]
  \le
  \frac1m
  \left(
    2d\norm{\grad f(x)}^2
    +
    \frac{d^2\mu^4}{18}M_3^2
  \right).
\end{alignat*}
\end{proposition}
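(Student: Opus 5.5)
The plan is a direct computation: split the estimator into a mean part and a fluctuation, and use the Taylor expansion \eqref{eq:cdiff} together with the sphere moments \eqref{eq:moment2}--\eqref{eq:moment4}. For each direction $u\sim\unifS$, define the single-sample estimator
\[
  \zeta(u):=d\,\frac{f(x+\mu u)-f(x-\mu u)}{2\mu}\,u .
\]
By \eqref{eq:cdiff} this equals
\[
  \zeta(u)=d\inner{\grad f(x)}{u}u+\tfrac{d\mu^2}{6}\,\grad^3 f(x+\xi u)[u^{\otimes3}]\,u ,
\]
with $\xi=\xi(u)\in(-\mu,\mu)$. Since $x\in\mathcal M(x_0,\Delta_0)$, the probe point $x+\xi u$ lies in $\mathcal M(x_0,\Delta_0,\mu)$, so the remainder coefficient is bounded by $M_3$.

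\emph{Bias.} Take expectations. Since $\bar g(x)=\E[\zeta(u)]$ and $\E[uu^\top]=\tfrac1d\Id$ by \eqref{eq:moment2}, the linear term gives exactly $\grad f(x)$. Hence
\[
  \bias(x)=\tfrac{d\mu^2}{6}\,\E\bigl[\grad^3 f(x+\xi u)[u^{\otimes3}]u\bigr].
\]
Jensen's inequality, $\norm{u}=1$ and the operator-norm bound then give $\norm{\bias(x)}\le \tfrac{d\mu^2}{6}M_3$. This is the same argument as Lemma~\ref{lem:bias}(i), with the design average replaced by the integral against $\sigma$.

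\emph{Variance.} The $m$ samples are i.i.d., so
\[
  \E\norm{\gest(x)-\bar g(x)}^2=\tfrac1m\bigl(\E\norm{\zeta}^2-\norm{\bar g}^2\bigr)\le\tfrac1m\,\E\norm{\zeta}^2 .
\]
Next, split $\zeta=A+B$ with $A:=d\inner{\grad f(x)}{u}u$ and $B:=\tfrac{d\mu^2}{6}\grad^3f(x+\xi u)[u^{\otimes3}]u$, and apply $\norm{A+B}^2\le2\norm{A}^2+2\norm{B}^2$. For the first piece, $\norm{A}^2=d^2\inner{\grad f(x)}{u}^2$, and \eqref{eq:moment2} gives $\E\norm{A}^2=d\norm{\grad f(x)}^2$, so $2\E\norm{A}^2=2d\norm{\grad f(x)}^2$. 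For the second piece, $\norm{B}^2\le \tfrac{d^2\mu^4}{36}M_3^2$ pointwise, so $2\E\norm{B}^2\le \tfrac{d^2\mu^4}{18}M_3^2$. Combining the two pieces gives the claimed bound.

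The only delicate point is measurability and integrability of $u\mapsto\zeta(u)$ and of the remainder term. The intermediate point $\xi(u)$ need not depend measurably on $u$. I would avoid this by writing the remainder as $B(u)=\zeta(u)-A(u)$, which is continuous in $u$, and using \eqref{eq:cdiff} only to obtain the pointwise bound $\norm{B(u)}\le \tfrac{d\mu^2}{6}M_3$. Everything else is routine, so this is the step I expect to need care, though not real difficulty.
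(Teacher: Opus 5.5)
Your proposal is correct and follows essentially the same route as the paper. For the bias you use $\E[uu^\top]=\tfrac1d\Id$ together with the pointwise bound on the Taylor remainder. For the variance you reduce to $\tfrac1m$ times the single-sample second moment, apply $\norm{A+B}^2\le 2\norm{A}^2+2\norm{B}^2$, and use $\E\inner{\grad f(x)}{u}^2=\tfrac1d\norm{\grad f(x)}^2$. You depart from the paper only in bookkeeping (absorbing the factor $d$ into $\zeta$) and in defining the remainder as the continuous difference $\zeta(u)-A(u)$; this sidesteps the measurability of the intermediate point $\xi(u)$, which the paper leaves implicit.
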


\begin{proposition}[RMSE of the Gaussian oracle]
\label{prop:rand-error-G-iii-iv}
Under the setting of Proposition~\ref{prop:rand-error-G-i-ii}, and by using the notation $[z]_+:=\max\{z,0\}$, it holds that
\begin{alignat*}{3}
(i)& \quad  \E[\norm{\gest(x)-\grad f(x)}^2]^{1/2}
  \le
  \sqrt{\frac{2(d+2)}{m}}\,\norm{\grad f(x)}
  +
  \frac{\mu^2}{6}d(d+2)M_3
  +
  \mathcal O\!\left(
    \frac{d^2\mu^2M_3}{\sqrt m}
  \right), \\
(ii)&  \quad  \E[\norm{\gest(x)-\grad f(x)}^2]^{1/2}
  \ge
  \frac1{\sqrt m}
  \left[
    \sqrt{d+1}\,\norm{\grad f(x)}
    -
    \frac{\mu^2}{6}M_3
    \sqrt{d(d+2)(d+4)(d+6)}
  \right]_+.
\end{alignat*}
Moreover, if $\grad f(x)\neq0$ and
\(
  \mu^2
  \le
  \frac{3\sqrt{d+1}\,\norm{\grad f(x)}}
  {M_3\sqrt{d(d+2)(d+4)(d+6)}},
\)
then
\[
  \E[\norm{\gest(x)-\grad f(x)}^2]^{1/2}
  \ge
  \frac12\sqrt{\frac{d+1}{m}}\,
  \norm{\grad f(x)}.
\]
\end{proposition}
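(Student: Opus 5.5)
The plan is to reduce both bounds to the bias and variance bounds of Proposition~\ref{prop:rand-error-G-i-ii} through the bias--variance decomposition. Writing $\gest(x)-\grad f(x)=(\gest(x)-\bar g(x))+\bias(x)$, the cross term vanishes in expectation because $\bar g(x)=\E[\gest(x)]$ and $\bias(x)$ is deterministic. This gives the exact identity
\[
\E[\norm{\gest(x)-\grad f(x)}^2]=\E[\norm{\gest(x)-\bar g(x)}^2]+\norm{\bias(x)}^2 .
\]

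For the upper bound (i), I would use $\sqrt{a+b+c}\le\sqrt a+\sqrt b+\sqrt c$ on the right-hand side. I insert the variance bound of Proposition~\ref{prop:rand-error-G-i-ii}(ii) and the bias bound (i). The gradient part of the variance gives $\sqrt{2(d+2)/m}\,\norm{\grad f(x)}$, and the bias gives $\tfrac{\mu^2}{6}d(d+2)M_3$. The remaining variance part is $\mu^2 M_3\sqrt{d(d+2)(d+4)(d+6)/(18m)}$, which is $\bigO(d^2\mu^2M_3/\sqrt m)$. This step is routine.

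For the lower bound (ii), I drop the bias term, so it suffices to lower-bound the variance. Since the directions $u_j$ are i.i.d., the variance equals $\frac1m\E\norm{X-\E X}^2$ with $X=D(u)u$ and $D(u)=\frac{f(x+\mu u)-f(x-\mu u)}{2\mu}$. By \eqref{eq:cdiff}, $X=\inner{\grad f(x)}{u}u+E(u)$, where $E(u)=\tfrac{\mu^2}{6}\grad^3f(x+\xi u)[u^{\otimes3}]u$. Minkowski's inequality in $L^2$ then gives
\[
\norm{X-\E X}_{L^2}\ge\norm{Y-\E Y}_{L^2}-\norm{E-\E E}_{L^2},
\qquad Y=\inner{\grad f(x)}{u}u .
\]
The main computation is the leading term. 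With $g=\grad f(x)$, we have $\E Y=g$ and $\E\norm{Y}^2=\E[\inner gu^2\norm u^2]$. Using Gaussian fourth moments (rotate so $g=\norm g e_1$) gives $\E[u_1^2\norm u^2]=3+(d-1)=d+2$. Hence $\E\norm{Y-g}^2=(d+1)\norm g^2$.

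For the error term, I would bound $\norm{E-\E E}_{L^2}\le\norm{E}_{L^2}$. Then $\abs{\grad^3f[u^{\otimes3}]}\le M_3\norm u^3$ gives $\norm{E}\le\tfrac{\mu^2}{6}M_3\norm u^4$. The Gaussian moment $\E\norm u^8=d(d+2)(d+4)(d+6)$ produces the stated subtractive term. Dividing by $\sqrt m$ and taking the positive part yields (ii).

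The final claim follows directly. Under the hypothesis on $\mu^2$, the subtracted quantity is at most $\tfrac12\sqrt{d+1}\norm{\grad f(x)}$, so the bracket is at least half its leading term. I expect the main obstacle to be the exact fourth- and eighth-moment bookkeeping: in particular, making sure the centred leading variance is $d+1$ rather than $d+2$, and checking that $M_3$ applies globally at the random probe points. The latter is guaranteed by the assumption $\mathcal M=\R^d$.
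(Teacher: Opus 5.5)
Your proposal is correct and follows essentially the same route as the paper: (i) comes from the bias--variance decomposition combined with Proposition~\ref{prop:rand-error-G-i-ii} and subadditivity of the square root, and (ii) comes from the reverse triangle inequality in $L^2$ applied to the split $\inner{\grad f(x)}{u}u+(D_\mu(u)-\inner{\grad f(x)}{u})u$, using the centred leading variance $(d+1)\norm{\grad f(x)}^2$, the remainder bound $\frac{\mu^2M_3}{6}\sqrt{\E\norm{u}^8}$, and then discarding the bias term. The points you flag explicitly (the vanishing cross term, $d+1$ rather than $d+2$, and the need for a global $M_3$ because Gaussian probe points are unbounded) are exactly the facts the paper's argument uses.
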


\begin{proposition}[RMSE of the unit sphere oracle]
\label{prop:rand-error-S-iii-iv}
Under the setting of Proposition~\ref{prop:rand-error-S-i-ii} and by using the notation $[z]_+:=\max\{z,0\}$,
it holds that
\begin{alignat*}{3}
(i)& \quad \E[\norm{\gest(x)-\grad f(x)}^2]^{1/2}
  \le
  \sqrt{\frac{2d}{m}}\,\norm{\grad f(x)}
  +
  \frac{d\mu^2}{6}M_3
  +
  \mathcal O\!\left(
    \frac{d\mu^2M_3}{\sqrt m}
  \right). \\
(ii)&  \quad   \E[\norm{\gest(x)-\grad f(x)}^2]^{1/2}
  \ge
  \frac1{\sqrt m}
  \left[
    \sqrt{d-1}\,\norm{\grad f(x)}
    -
    \frac{d\mu^2}{6}M_3
  \right]_+.
\end{alignat*}

Moreover, if $\grad f(x)\neq0$ and
\(
  \mu^2
  \le
  \frac{3\sqrt{d-1}\,\norm{\grad f(x)}}
  {dM_3},
\)
then
\[
  \E[\norm{\gest(x)-\grad f(x)}^2]^{1/2}
  \ge
  \frac12\sqrt{\frac{d-1}{m}}\,
  \norm{\grad f(x)}.
\]
\end{proposition}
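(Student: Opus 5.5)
The plan is to reduce everything to Proposition~\ref{prop:rand-error-S-i-ii} through the bias--variance decomposition
\[
\E\bigl[\norm{\gest(x)-\grad f(x)}^2\bigr]=\norm{\bias(x)}^2+\E\bigl[\norm{\gest(x)-\bar g(x)}^2\bigr].
\]
This identity holds because $\bar g(x)=\E[\gest(x)]$, so the cross term has zero mean. Throughout, fix $x\in\mathcal M(x_0,\Delta_0)$ and write each central difference as
\[
\frac{f(x+\mu u)-f(x-\mu u)}{2\mu}=\inner{\grad f(x)}{u}+e(u),\qquad \abs{e(u)}\le \tfrac{\mu^2}{6}M_3,
\]
using \eqref{eq:cdiff}. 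Then each summand of the oracle splits as $Y_j=A_j+B_j$, where $A_j:=d\inner{\grad f(x)}{u_j}u_j$ and $B_j:=d\,e(u_j)u_j$. We have $\E A_j=\grad f(x)$ by \eqref{eq:moment2}, and $\norm{B_j}\le \tfrac{d\mu^2}{6}M_3$ pointwise.

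\emph{Upper bound (i).} Take square roots in the decomposition and use $\sqrt{a+b}\le\sqrt a+\sqrt b$. This gives $\E[\norm{\gest-\grad f}^2]^{1/2}\le\norm{\bias}+(\mathrm{Var})^{1/2}$. Proposition~\ref{prop:rand-error-S-i-ii}(i) bounds the first term by $\tfrac{d\mu^2}{6}M_3$. Applying $\sqrt{a+b}\le\sqrt a+\sqrt b$ again to Proposition~\ref{prop:rand-error-S-i-ii}(ii), the second term is at most
\[
\sqrt{\tfrac{2d}{m}}\norm{\grad f(x)}+\tfrac{d\mu^2}{\sqrt{18m}}M_3 .
\]
The last piece is the claimed $\bigO(d\mu^2M_3/\sqrt m)$ term.

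\emph{Lower bound (ii).} Drop the bias: $\E[\norm{\gest-\grad f}^2]\ge\mathrm{Var}(\gest)=\tfrac1m\E\norm{Y_1-\E Y_1}^2$. By Minkowski in $L^2$,
\[
\norm{Y_1-\E Y_1}_{L^2}\ge\norm{A_1-\E A_1}_{L^2}-\norm{B_1-\E B_1}_{L^2}.
\]
For the $B$ part, $\norm{B_1-\E B_1}_{L^2}\le\norm{B_1}_{L^2}\le\tfrac{d\mu^2}{6}M_3$. For the $A$ part, compute exactly with $g:=\grad f(x)$:
\[
\E\norm{A_1}^2=d^2\E\inner{g}{u}^2=d\norm g^2,
\]
so $\norm{A_1-\E A_1}_{L^2}^2=(d-1)\norm g^2$. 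This yields $\tfrac1{\sqrt m}\bigl[\sqrt{d-1}\norm g-\tfrac{d\mu^2}{6}M_3\bigr]_+$, since the left side is nonnegative in any case.

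\emph{Final claim.} Under the stated condition on $\mu^2$, we have $\tfrac{d\mu^2}{6}M_3\le\tfrac12\sqrt{d-1}\norm g$. Substituting this into (ii) gives $\tfrac12\sqrt{(d-1)/m}\,\norm g$.

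The main obstacle is the lower bound. We must check that Minkowski is applied to the centred variables, and we need the exact second moment $\E\norm{A_1}^2=d\norm g^2$, which relies on $\norm u=1$. One also has to verify that the probe points $x\pm\xi u$ lie in $\mathcal M(x_0,\Delta_0)$, which is guaranteed by the definition of $\mathcal M$ with radius $\mu$.
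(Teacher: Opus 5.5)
Your proposal is correct and follows the paper's proof essentially step for step: for (i) you use the bias--variance decomposition with $\sqrt{a+b}\le\sqrt a+\sqrt b$ applied to Proposition~\ref{prop:rand-error-S-i-ii}, and for (ii) the reverse triangle inequality in $L^2$ on the split $Y=A+B$ with the exact value $\Var(A)=(d-1)\norm{\grad f(x)}^2$ (the paper keeps $A,B$ unscaled and carries the factor $d$ separately, which is only cosmetic). One quibble concerns your closing remark: for $x\in\mathcal M(x_0,\Delta_0,\mu)$ the probe points $x+\xi u$ are only guaranteed to lie within distance $2\mu$ of the sublevel set, so strictly you need $x$ in the sublevel set itself or $M_3$ taken over radius $2\mu$ (the paper has the same looseness).
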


The bounds of Propositions~\ref{prop:rand-error-G-iii-iv} and~\ref{prop:rand-error-S-iii-iv} show that, for both the Gaussian and the unit sphere randomised oracles, the estimation error contains a $\mu$-independent sampling-variance component of order $
  \sqrt{\frac{d}{m}}\,\norm{\grad f(x)}.
$
Consequently, decreasing the smoothing radius $\mu$ can reduce the finite-difference bias but it cannot eliminate the sampling error, which persists even as $\mu\to0$.

\begin{remark}
\label{rem:variance-floor-intrinsic}
The variance lower bounds in Propositions~\ref{prop:rand-error-G-iii-iv}
and~\ref{prop:rand-error-S-iii-iv} are not specific to Gaussian or spherical
sampling. More generally, the authors in \cite{gao2022generalizing} show that for directions
with i.i.d.\ zero-mean entries of variance $\Gamma^2$ and kurtosis $\kappa$,
the asymptotic MSE under the unbiased normalisation $\Gamma^2=1$ is
\[
  \E\norm{\gest(x)-\grad f(x)}^2
  \longrightarrow
  \frac{d+\kappa-2}{m}\norm{\grad f(x)}^2, 
\]
which is a $\mu$-independent term of order $\Theta(d/m)\,\norm{\grad f(x)}^2$ that no choice
of distribution removes. Thus, an $\Theta(d/m)$ sampling term persists as $\mu\to0$, with the minimum
attained by the Bernoulli distribution ($\kappa=1$).

\end{remark}

\begin{remark}\label{rem:design-removes}
The lower bounds in Propositions~\ref{prop:rand-error-G-iii-iv} and
\ref{prop:rand-error-S-iii-iv} both contain the sampling-variance term
$\Theta(\sqrt{d/m})\,\norm{\grad f(x)}$, so the two randomised oracles are
essentially equivalent in their dominant error. 
The spherical-design oracle removes the sampling-variance term 
entirely. 
\end{remark}

\subsection{The gradient estimation error}
\label{sec:accuracy}
 The next
result shows that the oracles will behave differently
as the target accuracy for the gradient estimation improves. The
design oracle attains any prescribed relative gradient accuracy at a query count independent of that accuracy, whereas the Gaussian sample cost grows with
$\delta^{-2}$.

\begin{theorem}\label{thm:accuracy}
Let $x_0\in\R^d$ and $\Delta_0>0.$ Fix $x\in\mathcal M(x_0,\Delta_0,\mu)$ with $\grad f(x)\neq 0$ and a target relative error
$\delta\in(0,1)$. Consider the oracles of
Propositions~\ref{prop:rand-error-G-i-ii} and~\ref{prop:rand-error-S-i-ii}, the Gaussian and unit sphere oracle with $m$ samples, and the
base $2$-design oracle \eqref{eq:base-oracle} with $N=d+1$.
\begin{enumerate}[label=(\roman*)]
\item \emph{(Gaussian and unit sphere oracles)} The relative RMSE obeys
\[
  \frac{\E[\norm{\gest(x)-\grad f(x)}^2]^{1/2}}{\norm{\grad f(x)}}
   \ge \bigOm \left(\sqrt{\frac{d}{m}}\right)\
  \quad \text{for} \quad \mu \to 0.
\]
Moreover, attaining a relative error of at most $\delta$ requires
$m\ge(d+1)/\delta^{2}=\Theta(d\,\delta^{-2})$ samples.
\item \emph{(Design oracle)} 
Any
\(
  \mu\le\mu_\delta=\sqrt{\frac{6\,\delta\,\norm{\grad f(x)}}{d\,M_3(x_0,\Delta_0,\mu)}}
\)
yields a relative RMSE of at most $\delta,$ using $N=d+1$ directions  (independent of $\delta$).
\end{enumerate}
\end{theorem}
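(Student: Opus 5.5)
The plan is to treat the two oracle families separately, using the error bounds already established. For part~(i) we rely on the lower bounds in Propositions~\ref{prop:rand-error-G-iii-iv}(ii) and~\ref{prop:rand-error-S-iii-iv}(ii). Dividing each by $\norm{\grad f(x)}>0$ gives
\[
  \frac{\E[\norm{\gest(x)-\grad f(x)}^2]^{1/2}}{\norm{\grad f(x)}}
  \ge \frac{1}{\sqrt m}\Bigl[\sqrt{d\pm1}-\frac{c_d\,\mu^2 M_3}{\norm{\grad f(x)}}\Bigr]_+ ,
\]
where $c_d=\sqrt{d(d+2)(d+4)(d+6)}/6$ in the Gaussian case and $c_d=d/6$ in the spherical case. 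As $\mu\to0$ the subtracted term vanishes, because $M_3(x_0,\Delta_0,\mu)$ is non-decreasing in $\mu$ and hence bounded for $\mu$ below any fixed value. The bracket therefore tends to $\sqrt{d+1}$ (Gaussian) or $\sqrt{d-1}$ (sphere), and both are $\Omega(\sqrt{d/m})$ since $d-1\ge d/2$ for $d\ge2$.

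For the sample count, we require the relative RMSE to be at most $\delta$. In the limit $\mu\to0$ this forces $\sqrt{(d+1)/m}\le\delta$, that is, $m\ge (d+1)/\delta^2$ for the Gaussian oracle. For the sphere the same argument gives $m\ge(d-1)/\delta^2$. Both are $\Theta(d\delta^{-2})$, matching the stated requirement up to the constant. For fixed small $\mu>0$ one can instead use the ``moreover'' versions, which give a factor $1/2$ in the lower bound and hence $m\ge (d\pm1)/(4\delta^2)$; this is still $\Theta(d\delta^{-2})$.

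For part~(ii) the design oracle is deterministic, so its RMSE equals $\norm{\bias(x)}$. By Lemma~\ref{lem:bias}(i), which needs $t\ge2$ (satisfied by the simplex design of Example~\ref{ex:simplex} with $N=d+1$) and $p\ge3$, we have $\norm{\bias(x)}\le \frac d6\mu^2 M_3(x_0,\Delta_0,\mu)$. Requiring this to be at most $\delta\norm{\grad f(x)}$ gives $\mu^2\le 6\delta\norm{\grad f(x)}/(dM_3)$, which is exactly $\mu\le\mu_\delta$. Since $N=d+1$ is fixed by the design and does not depend on $\delta$, the query count $2N$ is independent of $\delta$.

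The main subtlety is that $\mu_\delta$ depends on $M_3(x_0,\Delta_0,\mu)$, which itself depends on $\mu$, so the condition is implicit. I would resolve this by monotonicity. Fix a reference radius $\mu_1$ and use $\bar M_3:=M_3(x_0,\Delta_0,\mu_1)\ge M_3(x_0,\Delta_0,\mu)$ for all $\mu\le\mu_1$. Then any $\mu\le\min\{\mu_1,\sqrt{6\delta\norm{\grad f(x)}/(d\bar M_3)}\}$ satisfies the stated condition, so admissible $\mu$ exist. One should also check that $x\in\mathcal M(x_0,\Delta_0,\mu)$ is consistent with the probe points lying in the region where $M_3$ is taken. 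This holds because the proof of Lemma~\ref{lem:bias} only uses points on segments of length $\mu$ around $x$.
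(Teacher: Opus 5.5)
Your proposal is correct and follows the paper's route. For part (i) you use the lower bounds of Propositions~\ref{prop:rand-error-G-iii-iv}(ii) and~\ref{prop:rand-error-S-iii-iv}(ii) as $\mu\to0$, and for part (ii) you apply Lemma~\ref{lem:bias}(i) to the deterministic simplex oracle. You are also more careful than the paper's short proof in three places. You correctly observe that the spherical oracle only forces $m\ge(d-1)/\delta^2$, so the stated $(d+1)/\delta^2$ is literally the Gaussian requirement and holds for the sphere only in the $\Theta(d\delta^{-2})$ sense. You note that $d\ge2$ is needed for the spherical $\Omega(\sqrt{d/m})$ claim. And you resolve the implicit dependence of $\mu_\delta$ on $\mu$ through the monotonicity of $M_3$.

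Your closing consistency check is wrong as stated, however. A point $x\in\mathcal M(x_0,\Delta_0,\mu)$ may lie at distance up to $\mu$ from the sublevel set $\{y: f(y)\le f(x_0)+\Delta_0\}$. The probe points $x+\xi u$ with $\abs{\xi}<\mu$ can then be at distance up to $2\mu$ from that set. So they are only guaranteed to lie in $\mathcal M(x_0,\Delta_0,2\mu)$, not in $\mathcal M(x_0,\Delta_0,\mu)$; "segments of length $\mu$ around $x$" does not suffice. The same slip appears in the paper's proof of Lemma~\ref{lem:bias}. Since you invoke that lemma as stated, your main argument is unaffected. A correct version of the check would either take $x$ in the sublevel set itself, or replace $M_3(x_0,\Delta_0,\mu)$ by $M_3(x_0,\Delta_0,2\mu)$. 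The latter is finite once $2\mu$ is below a radius at which Assumption~\ref{ass:finite-M} holds, by the same monotonicity you already use.
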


\begin{proof}
By Propositions~\ref{prop:rand-error-G-iii-iv} and~\ref{prop:rand-error-S-iii-iv} with sufficiently small $\mu,$ and by upper bounding the error by $\delta,$ item (i) can be concluded.
Considering Lemma~\ref{thm:exact}(i),  Lemma~\ref{lem:bias}(i) and imposing $\norm{\bias(x)}\le\tfrac{d}{6}\mu^2 M_3(x_0,\Delta_0)\le\delta\norm{\grad f(x)}$ gives
$\mu\le\mu_\delta$, i.e., item (ii) follows. We additionally observe that the direction count $N=d+1$ does not depend on $\delta$.
\end{proof}

Considering Theorem~\ref{thm:accuracy}, both the design and randomised oracles cost $\bigO(d)$ per gradient at a fixed accuracy. When
$\delta\to0$ the randomised oracles' cost grows to $\Theta(d\,\delta^{-2})$, whereas the
$2$-design cost remains $\Theta(d)$. The design oracle is therefore strictly 
cheaper  for every $\delta<1$, and the gap widens by the factor $\delta^{-2}$ as $\delta\to0$.

\section{Numerical Examples}\label{sec:numerics}


The experiments are implemented in Python and run on a laptop with an Intel Core
i7-1265U processor.\footnote{The code is available at \url{https://...}.} Unless
stated otherwise we work in dimension $d=8$, where an exact design is available in
closed form. For the oracle \eqref{eq:base-oracle} we use the regular simplex,
an exact $2$-design of $N=d+1=9$ points. For the $4$-design oracle we use the
$E_8$ lattice set ($N=240$) and the higher-order oracle \eqref{eq:ho-oracle} is taken at
$p=5$, that is, the simplex $2$-design augmented with the symmetric radial stencil
$\rho\in\{\pm1,\pm2\}$, $c\in\{\pm\tfrac23,\mp\tfrac1{12}\}$ obtained from \eqref{eq:order-conditions}, whose bias is $\bigO(\mu^{4})$. 
We compare our results with the randomised baselines, including Gaussian
\citep{nesterov2017random} and spherical oracles \citep{flaxman2004online}, whose directions are sampled from the Gaussian distribution and the uniform distribution over the unit sphere. Both of the ZO oracles are evaluated with $m=d+1$ central-difference directions, matching the per-step evaluation budget $2(d+1)$ of the base design oracle. Their curves are averaged over independent seeds, with shaded bands indicating $\pm1$ standard deviation. 

For a differentiable $f$, the estimation error of a finite-difference oracle consists of two terms:
\begin{equation}\label{eq:num-errsplit}
  \|\gest(x)-\grad f(x)\|
  \;\approx\;
  \underbrace{C_{\mathrm{trunc}}\,\mu^{\,p-1}}_{\text{smoothing bias}}
  \;+\;
  \underbrace{C_{\mathrm{round}}\,\frac{\varepsilon_{\mathrm{mach}}\,|f(x)|}{\mu}}_{\text{roundoff}}.
\end{equation}
The second term arises from the floating-point cancellation incurred when the
nearly equal values are subtracted and divided by $2\mu$. 
When the oracle reconstructs the gradient exactly, only the roundoff term remains and the error curve would decrease with $\mu.$ Such a phenomenon will be observed in some of the figures below.


\subsection{Convergence on a quartic objective}\label{ex:so}
Consider the optimization problem
\begin{equation}\label{eq:quartic}
    \min_{x\in\R^d} \quad \left\{f(x)=\frac12 x^\top A x+\frac{\alpha}{4}\sum_{i=1}^d (x_i-1)^4 \right\}
\end{equation}
with $\alpha\in\{0,0.5,2\}.$  When $\alpha=0,$ it is a quadratic problem with condition number $\kappa=3;$ and when $\alpha>0,$ it is a quartic problem with non-vanishing $\grad^3 f$ at the optimum. We generate the matrix $A=Q\,\mathrm{diag}(\lambda_1,\dots,\lambda_d)\,Q^\top,$ where $Q$ is a random $d\times d$ orthogonal matrix and $\lambda_1,\dots,\lambda_d$ logarithmically spaced between $1$ and $\kappa$, so that $A$ is positive definite with condition number $\kappa$.
We use the step size $\eta=c/L$ with $c=1$ for
the quadratic and $c=\tfrac12$ for the quartics (which satisfy the step size condition in our theory), a fixed smoothing radius $\mu=10^{-2}$, and run $200$ iterations. Randomised curves are averaged over $50$ seeds with $\pm1$ standard-deviation bands shaded in the figure. 

\begin{figure}[tbp]
  \centering
  \includegraphics[width=0.9\textwidth]{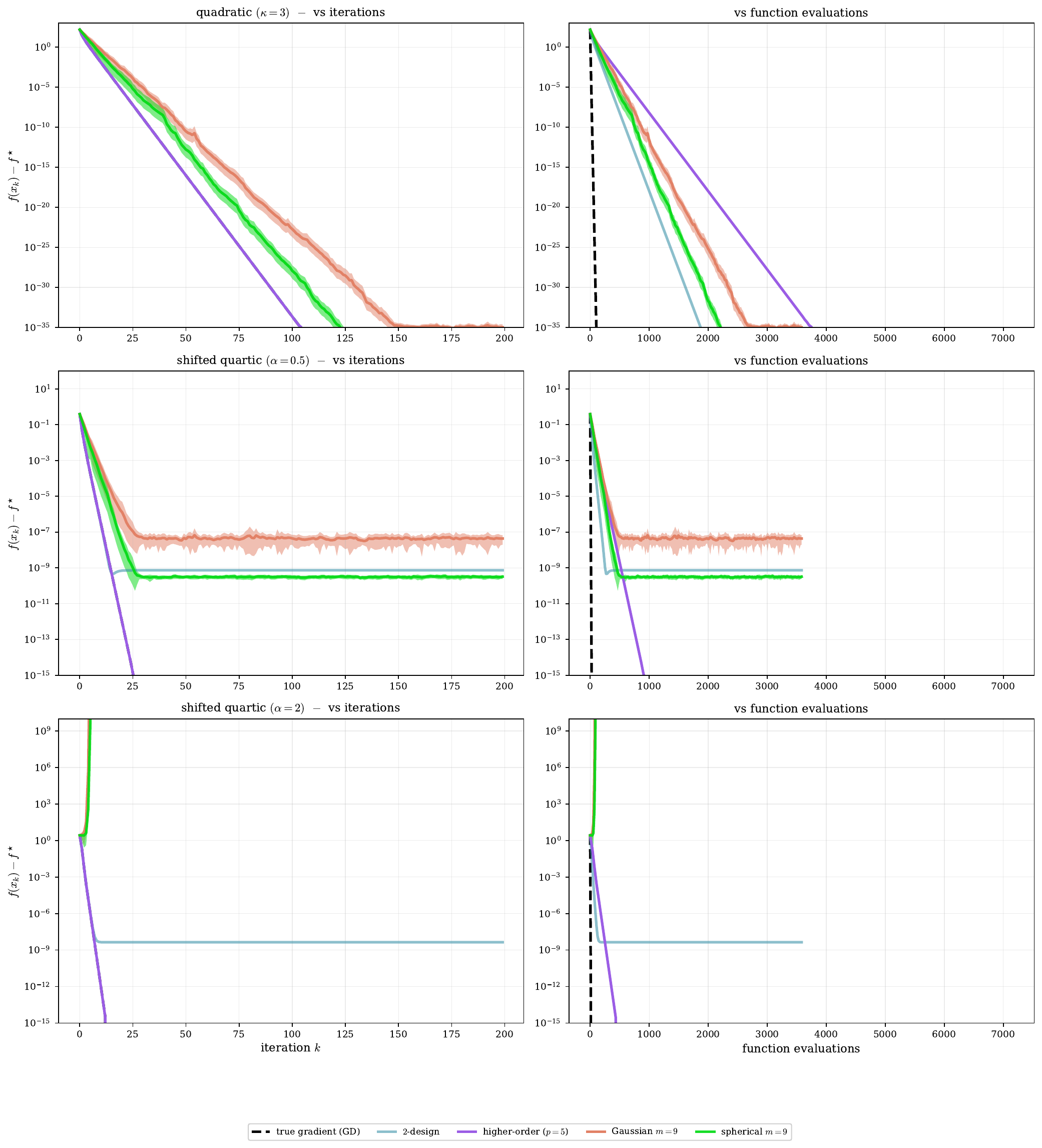}
  \caption{Optimality gap $f(x_k)-f^\star$ versus iterations (left column) and
  function evaluations (right column), averaged over $50$ seeds with $\pm1$
  standard-deviation bands. The top row considers the quadratic setting ($\alpha=0$); the middle and the bottom rows consider the quartic setting with $\alpha=0.5$ and $\alpha=2,$ respectively.}
  \label{fig:convergence}
\end{figure}

Figure~\ref{fig:convergence} reports the optimality gap
against both the iteration count and the number of function evaluations. We see that, when $\alpha=0$ (top row), the optimality gap $f(x_k) - f^\ast$ of the iterates of the 2-design oracle coincides with that of the true
gradient descent, verifying our result in Lemma~\ref{thm:exact}.
The randomised Gaussian and spherical
oracles converge
but their trajectories fluctuate by about one
standard deviation around the mean. 
Since their variance is proportional to
$\|\grad f(x_k)\|^2$ (see Propositions~\ref{prop:rand-error-G-iii-iv} and~\ref{prop:rand-error-S-iii-iv}), it decreases as the iterates get closer to the optimum. Consequently, the randomised oracles still converge but at a slower rate.

When $\alpha=0.5$ (the middle row), we see that all methods, except the higher-order oracle, converges
linearly and then settles at a plateau. This matches our results in Theorem~\ref{thm:structured}, where the iterates are only guaranteed to converge
to a neighbourhood of the equilibrium. Among all, the Gaussian oracle gives the largest objective value, as verified by the bound in Propositions~\ref{prop:rand-error-G-i-ii} and~\ref{prop:rand-error-S-i-ii}.
When $\alpha=2$ (the bottom row), the Gaussian and
spherical oracles diverge, whereas the design oracles remain stable. the third-order derivative $M_3(x_0,\Delta_0,\mu)$ is larger, so is the radius of the converging neighbourhood.
For both settings ($\alpha=0.5$ and $\alpha=2$), the higher-order
oracle tracks the true
gradient descent to machine precision.

\subsection{Gradient-estimation error}
Next, we examine the gradient estimation error of each method at a fixed point that is randomly drawn from the normal distribution. We consider the optimisation problem~\eqref{eq:quartic} and again, we consider the quadratic objective ($\alpha=0$) and the quartic objective ($\alpha=1$). For the quadratic objective, we set $\kappa=20;$ and for the quartic objective, we set $\kappa=5.$ We sweep the smoothing radius $\mu$
and record $\|\gest(x)-\grad f(x)\|$ for the
deterministic oracles and the RMSE over $50$ seeds for the
randomised oracles. Figure~\ref{fig:est-error} shows the gradient estimation error against different smoothing radius $\mu$ at the fixed point.
\begin{figure}[tbp]
  \centering
  \includegraphics[width=0.85\textwidth]{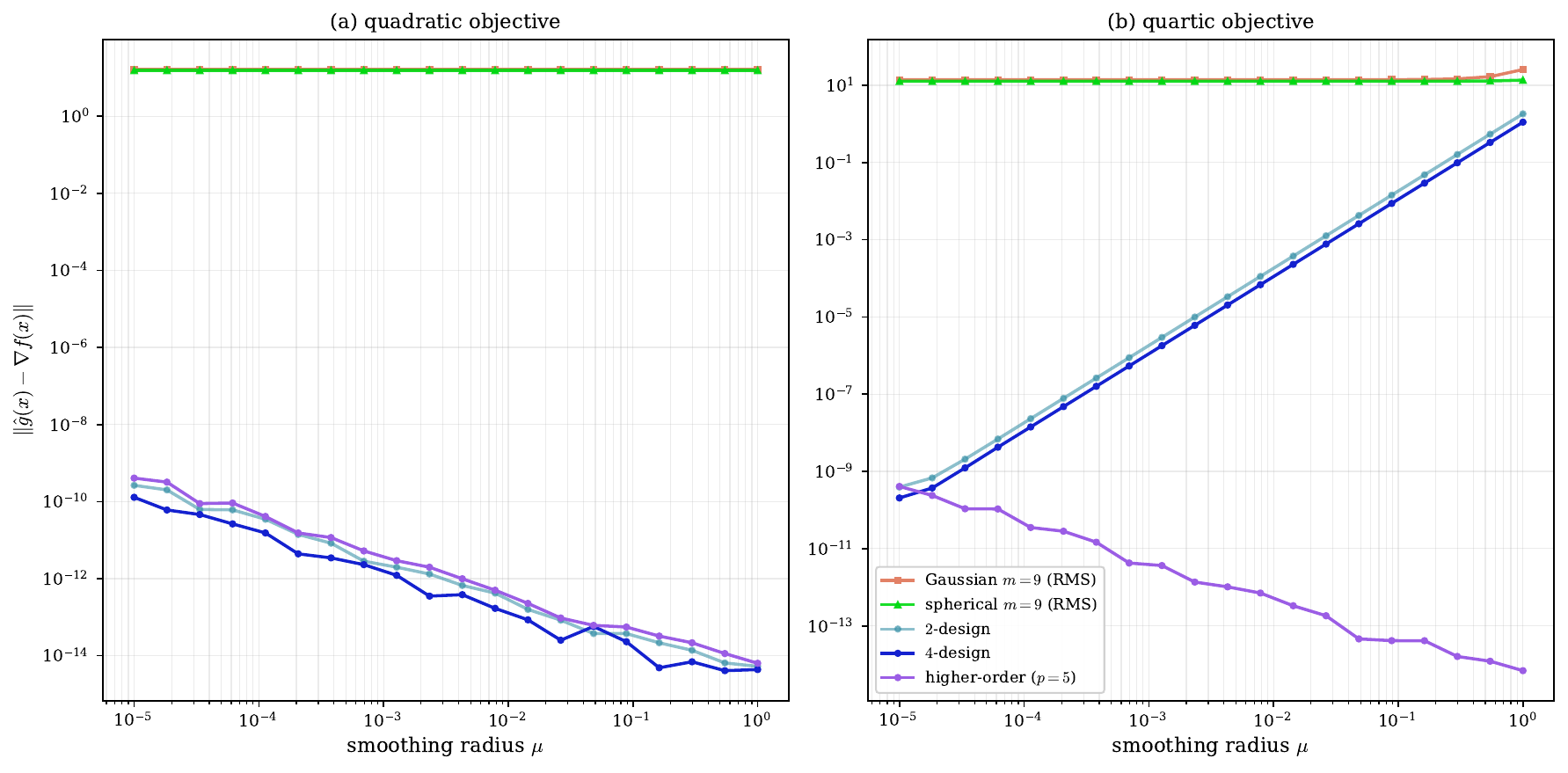}
  \caption{Gradient-estimation error versus smoothing radius $\mu$ at a fixed
  point ($d=8$): \textbf{(a)} a quadratic objective; \textbf{(b)} a quartic
  objective.}
  \label{fig:est-error}
\end{figure}
Figure~\ref{fig:est-error}(a) shows the error for the quadratic objective. From Lemma~\ref{thm:exact}, we know that a $2$-design reproduces the gradient of any affine or quadratic function exactly (i.e., no bias), so only the roundoff term in \eqref{eq:num-errsplit} remains in the residual, which decreases as $\mu$ increases. On the contrary, the Gaussian and unit sphere estimators are flat independent of the smoothing radius $\mu,$ matching the variance lower bound $\Theta(\sqrt{d/m})\|\grad f(x)\|$ in Propositions~\ref{prop:rand-error-G-iii-iv} and~\ref{prop:rand-error-S-iii-iv}.

Figure~\ref{fig:est-error}(b) shows the error for the quartic objective. The $2$-design and the $4$-design both demonstrate the $\bigO(\mu^2)$ bias as proved in Lemma~\ref{lem:bias}. 
The higher-order oracle outperforms all other methods. With order $p=5,$ the bias bound in Theorem~\ref{thm:ho} vanishes and it can well approximate the gradient of a quartic function.

\subsection{Non-differentiable objective}
\label{sec:numerics-nonsmooth}
In this experiment, we consider a non-smooth objective, which violates the continuously differentiability assumption in Assumption~\ref{ass:smooth}. We consider the
convex max-of-quadratics
\[
  f(x)=\max_{1\le j\le J}\Bigl(\tfrac12 x^\top A_j x + b_j^\top x\Bigr),
  \qquad A_j \succeq 0,
\]
with dimension $d=8$ and $J=6.$ To generate $f$, we fix a seed, sample $\tilde b_1,\dots,\tilde b_J \sim \mathcal{N}(0,I_d)$ independently, and normalise $b_j$ such that
$\sum_{j=1}^J b_j=0$. Each matrix is generated as
\(
A_j=\frac1d B_jB_j^\top+\gamma I_d
\)
with $\gamma=0.5$, where $B_j\in\R^{d\times d}$ has i.i.d.\ standard normal entries. 
Given positive definite matrices $A_j$ ($j=1,\ldots,J$) and $\sum_{j=1}^J b_j=0,$ we know that
\[
f(x)\ \ge\ \frac1J\sum_{j=1}^J\Bigl(\tfrac12 x^\top A_j x+b_j^\top x\Bigr)\ \ge\ \frac{\gamma}{2}\|x\|^2 \qquad \text{for all } x\in\R^d,
\]
and thus $f$ is minimised at $x^\star=0$ with
\(
\partial f(0)=\mathrm{conv}\{b_1,\dots,b_J\}
\)
and $f^\star=f(0)=0.$

We compare the performance of the oracle \eqref{eq:base-oracle} with $2$-design,
the higher-order oracle with $p=5$, with that of the randomised Gaussian and spherical oracles
($m=d+1$), and the classical subgradient method. Consider $\bar L:=\max_j\lambda_{\max}(A_j).$ Figure~\ref{fig:nonsmooth} reports the optimality gap against the number of iterations when a constant step size $\bar\eta=c/\bar L$ with $c\in\{0.1,1\}$ and the smoothing radius $\mu\in\{10^{-2},10^{-4}\}$ are used for all methods.

\begin{figure}[tbp]
  \centering
  \includegraphics[width=0.9\textwidth]{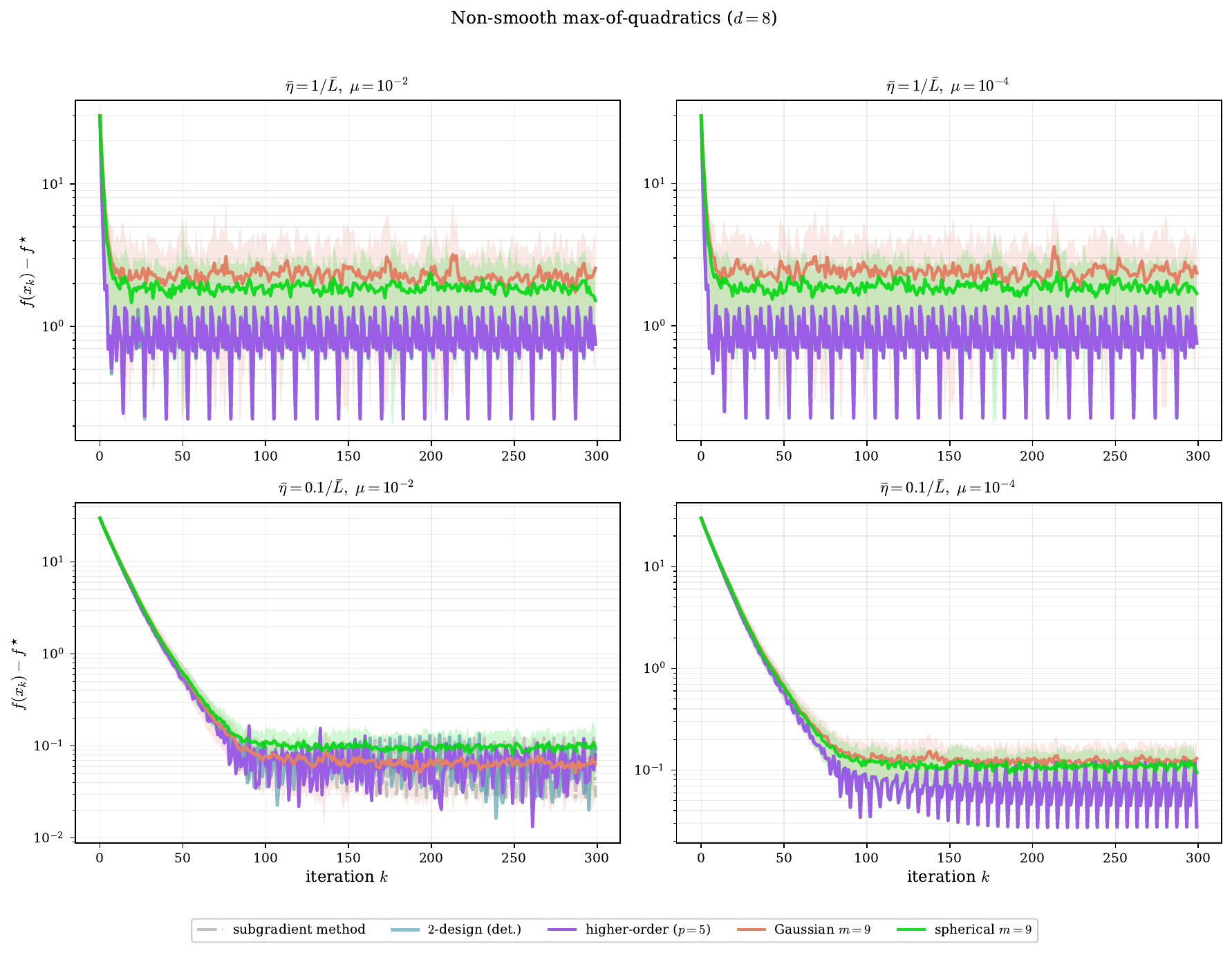}
  \caption{Non-smooth max-of-quadratics: optimality gap
  versus iteration under the constant step size $\bar\eta=c/\bar L$, with $c\in\{0.1,1\}$ and $\mu\in\{10^{-2},10^{-4}\}$.}
  \label{fig:nonsmooth}
\end{figure}


We see that, in this example, the design oracles remain stable on the
non-smooth problem and behaves comparably to the subgradient method. 
With a larger step size (i.e., $\bar{\eta}=1/\bar{L}$), all methods oscillate in a band around $f^\star$;
and with a smaller step size (i.e., $\bar{\eta}=0.1/\bar{L}$), every method descends to a
common $\bigO(10^{-1})$ neighbourhood. The smoothing radius $\mu\in\{10^{-2},10^{-4}\}$ does not affect much the performance of all methods, just with a slight improvement of the $2$-design and higher-order design when a smaller smoothing radius and smaller step size are used.

Moreover, we see that the higher-order oracle offers \emph{no} advantage
over the base $2$-design when $f$ is non-smooth. 
Under this setting, the Taylor terms in Theorem~\ref{thm:ho} cannot be cancelled and thus the $\bigO(\mu^{p-1})$ bias of the higher-order oracle does not apply. This shows that the order of continuous differentiability $p$ is essential for the higher-order oracle. When the smoothing radius is larger (see the bottom left figure in Figure~\ref{fig:nonsmooth}), the iterate crosses more active pieces of the max near $x^\star$, resulting in a larger oscillation of the objective value. 

\subsection{Neural-network classification}
\label{sec:numerics-nn}
Lastly, we consider a higher-dimensional example, where the per-step cost of an exact design (specifically, $2(d+1)$ evaluations) becomes substantial. The spherical
design oracle, exact up to the smoothing bias, can apply a large step $\eta$, whereas
the randomised oracles, whose variance of the estimate is given by $\Theta(d/m)$ in
Propositions~\ref{prop:rand-error-G-iii-iv} and~\ref{prop:rand-error-S-iii-iv}, can only perform a small step $\eta_{\mathrm{rand}}$ for stability. The question is: Which is the more budget-friendly option with limited function evaluations?

To address this, we train a one-hidden-layer neural network for binary classification. For an input
$\xi\in\R^P$ the network computes
\[
  \widehat y(\xi;\theta)
  = \Gamma\!\bigl(w_2^\top \zeta(W_1\xi+b_1)+b_2\bigr),
  \qquad
  \zeta(z)=\text{ReLU}(z),
  \quad
  \Gamma(z)=\tfrac{1}{1+e^{-z}},
\]
with hidden width $H=64$, ReLU activation $\zeta$ applied componentwise, and parameters $\theta=(W_1,b_1,w_2,b_2)$ stacked into a single vector of
dimension $d=HP+2H+1$.  
The loss is the $\ell_2$-regularised binary
cross-entropy over the $n$ training examples $(\xi_i,y_i)$,
\[
  f(\theta)=\frac1n\sum_{i=1}^{n}
  \Bigl[-y_i\log\widehat y(\xi_i;\theta)-(1-y_i)\log\bigl(1-\widehat y(\xi_i;\theta)\bigr)\Bigr]
  +\tfrac{\lambda}{2}\norm{\theta}^2
\]
with the regularisation parameter $\lambda=10^{-4}.$ Each ZO query is one
forward pass over the training set, and we report progress against the number of queries, the operative cost in the derivative-free setting.

We use two standard UCI binary-classification datasets, chosen to place the
parameter dimension in the target range. The first is the Wisconsin Diagnostic
Breast Cancer set ($P=30$ features, $569$ examples), giving
$d=30\cdot64+2\cdot64+1=2049$. The second is the Spambase set ($P=57$ features),
subsampled to $1000$ examples, giving $d=57\cdot64+2\cdot64+1=3777$. At these
dimensions a full simplex $2$-design uses $N=d+1$ directions, so a single design
step costs $2(d+1)\approx4100$ and $7560$ evaluations respectively,
orders of magnitude more than a randomised step with $m=10$ directions. Each dataset
is split $75/25$ into training and test sets by a seeded permutation and
standardised using the training-set statistics. The weights are initialised at
$1/\sqrt{P}$- and $1/\sqrt{H}$-scaled random values and the biases at zero.

We compare the base $2$-design oracle, using the full simplex design
($N=d+1$ directions, $2(d+1)$ evaluations per step), against the Gaussian and
spherical oracles with $m=10$ directions ($2m=20$ evaluations per step). The
smoothing radius is $\mu=0.1$ throughout. To set the step size, we estimate the gradient Lipschitz constant $\bar L$ numerically and use $\eta=1/\bar L$ for the design oracle and $\eta_{\mathrm{rand}}=\eta\,m/d$ for the randomised oracles, which is consistent with the $\Theta(d/m)$ variance in Propositions~\ref{prop:rand-error-G-iii-iv} and~\ref{prop:rand-error-S-iii-iv}.

Figure~\ref{fig:nn-bc} reports the training loss and the test and training accuracy
against the number of function evaluations on the breast-cancer dataset. We see that all methods reach comparable final accuracy, showing that the smoothed ZO oracles optimise the network effectively. Moreover, even though the per-step cost of a 2-design is higher, it attains a lower training loss than the randomised oracles at equal budget with a larger step size.
\begin{figure}[tbp]
  \centering
  \includegraphics[width=\textwidth]{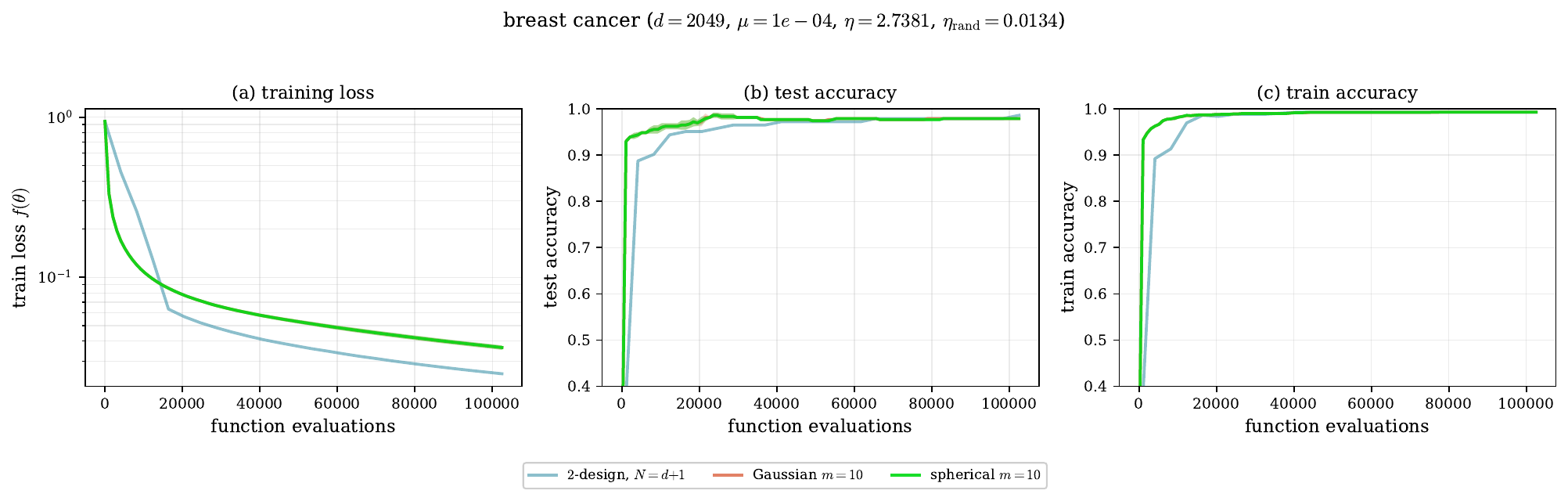}
  \caption{Breast-cancer classifier,
  $c=1$): \textbf{(a)} training loss, \textbf{(b)} test accuracy, and
  \textbf{(c)} training accuracy versus the number of function evaluations.}
  \label{fig:nn-bc}
\end{figure}

Similarly, Figure~\ref{fig:nn-spam} shows the training loss and the test and training accuracy
against the number of function evaluations on the Spambase dataset at a higher dimension
$d=3777.$ 
As can be seen, the $2$-design obtains a lower training loss and comparable accuracy curves compared with the randomised ZO oracles.
\begin{figure}[tbp]
  \centering
  \includegraphics[width=\textwidth]{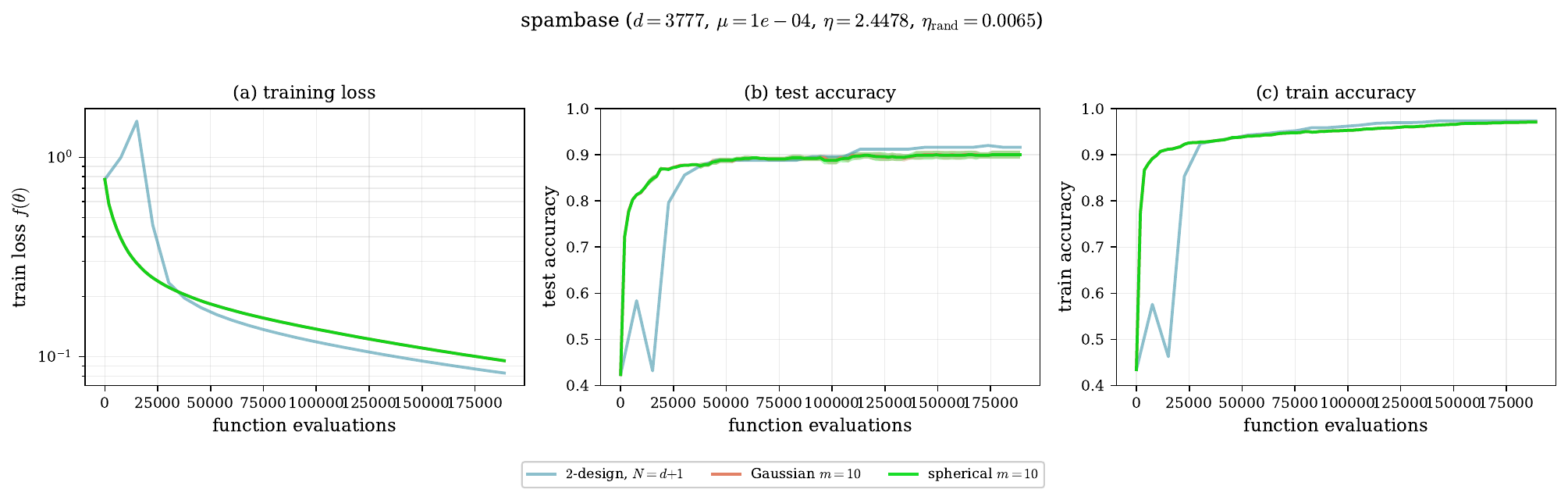}\\[4pt]
  \includegraphics[width=\textwidth]{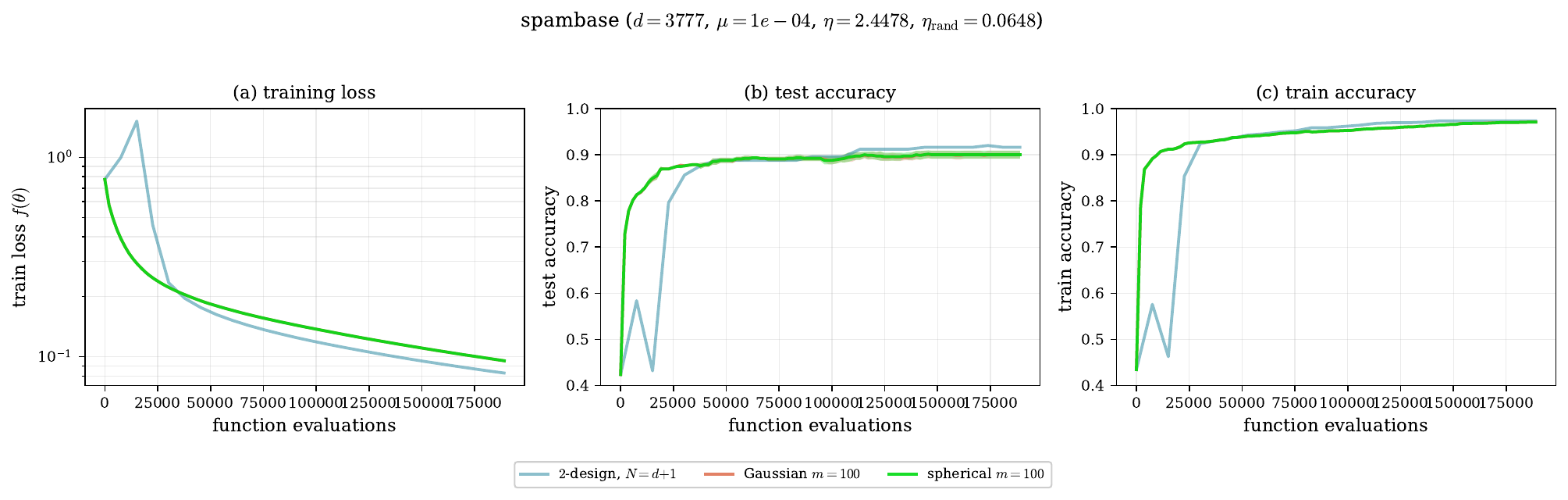}
  \caption{Spambase classifier: training loss and
  test/training accuracy versus function evaluations.}
  \label{fig:nn-spam}
\end{figure}

\section{Conclusion and discussion}\label{sec:discussion}
In this work, we proposed the ZO spherical design oracle. We introduced the concept of spherical design and showed its bias when applying the central differences. We further proposed a general finite differences method that captures the higher-order smoothness of the objective and showed its bias. We compared our methods with the existing randomised ZO methods by developing the bias, variance and the RMSE of the existing methods. We showed that our proposed method outperformed the existing randomised methods in several settings.
One possible future direction is to study the noisy function evaluation setting. Our current the analysis is noiseless and thus eliminates the sampling variance of the
direction set. Thus, it is not clear how the noise of the function evaluation would affect our proposed ZO method.
Also, since the ZO spherical design oracle needs
$\bigOm(d^{\lfloor t/2\rfloor})$ of query cost, it would be interesting to study the application of spherical design in high-dimensional settings.
Another direction is to analyse the guarantees and performance of the design oracle in solving the non-differentiable optimisation problems.

\backmatter

\begin{appendices}

\section{Proofs of Section~\ref{sec:randomised-oracle-error}}\label{app:proof}
Here, we present the proofs of propositions in Section~\ref{sec:randomised-oracle-error}.
To shorten expressions in the following, we define the central-difference quantity
\(
  D_\mu(u):=\frac{f(x+\mu u)-f(x-\mu u)}{2\mu}.
\)
For $g(s):=f(x+su)$, Taylor's theorem and 
the intermediate value theorem give 
\begin{equation}\label{eq:cdiff-rem}
  D_\mu(u)
  =
  \inner{\grad f(x)}{u}
  +\frac{\mu^2}{6}\,
    \grad^3 f(x+\zeta u)[u,u,u].
\end{equation}
for some $\zeta\in(-\mu,\mu)$.
Here, the even-order terms cancel by antisymmetry. From the definition of $M_3$ in \eqref{eq:Mj} we obtain the bound 
\begin{equation}\label{eq:cdiff-error}
  \abs{D_\mu(u)-\inner{\grad f(x)}{u}}
  \le
  \frac{\mu^2}{6}M_3\norm{u}^3
\end{equation}
used in the following proofs.

Moreover, the proofs for the Gaussian oracle use the following moment identities for
$u\sim\mathcal N(0,\Id)$. Since $\norm u^2$ has a chi-square distribution
with dimension $d$ by assumption, $\E[\norm u^{2k}]=\prod_{i=0}^{k-1}(d+2i)$
\citep[Ch.~18, eq (18.8)]{johnson1994continuous}, and in particular
\begin{equation}\label{eq:gauss-norm-moments}
  \E[\norm u^{4}]=d(d+2)
  \quad \text{and} \quad
  \E[\norm u^{8}]=d(d+2)(d+4)(d+6).
\end{equation}
Similarly, Isserlis' theorem \citep{isserlis1918formula} gives
$\E[u_au_bu_cu_e]=\delta_{ab}\delta_{ce}+\delta_{ac}\delta_{be}
+\delta_{ae}\delta_{bc}$, the Gaussian counterpart of \eqref{eq:moment4},
and which allows us to use the qualities 
\begin{equation}\label{eq:gauss-inner-moments}
  \E[\inner au^2]=\norm a^2,
  \qquad
  \E[\inner au^2\norm u^2]=(d+2)\norm a^2 
\end{equation}
for $a\in\R^d$. For $u\sim\unifS$ the corresponding identity is \eqref{eq:moment2}, which
gives $\E[\inner au^2]=a^\top\E[uu^\top]a=\tfrac1d\norm a^2$.

\begin{proof}[Proof of Proposition~\ref{prop:rand-error-G-i-ii}]
Let
\(
  Y_j:=D_\mu(u_j)u_j \)
  and \(  \gest(x)=\frac1m\sum_{j=1}^m Y_j.
\)

\emph{(i)} 
Since $\E[uu^\top]=\Id$, \eqref{eq:cdiff-rem} gives
\(
  \bar g(x)
  =
  \grad f(x)
  +
  \frac{\mu^2}{6}
  \E\!\left[
    \grad^3f(x+\zeta u)[u,u,u]u
  \right].
\)
Thus, using
$\norm{\grad^3f(\cdot)[u,u,u]u}\le M_3\norm u^4$ (where $M_3$ is defined in \eqref{eq:Mj}) and
$\E\norm u^4=d(d+2),$ which follows from \eqref{eq:gauss-norm-moments},
\[
  \norm{\bias(x)}
  \le
  \frac{\mu^2}{6}M_3\E\norm u^4
  =
  \frac{\mu^2}{6}d(d+2)M_3.
\]

\emph{(ii)} 
Since $u_1,\dots,u_m$ are i.i.d., so are $Y_1,\dots,Y_m$, and hence
\begin{align}
  \E[\norm{\gest(x)-\bar g(x)}^2]
  =
  \frac1m\Var(Y_1)
  \le
  \frac1m\E\norm{Y_1}^2
  =
  \frac1m\E[D_\mu(u)^2\norm u^2]. \label{eq:aux_iid}
\end{align}
Using \eqref{eq:cdiff-rem} and $(a+b)^2\le2a^2+2b^2$, leads to the inequality
\[
  \E[D_\mu(u)^2\norm u^2]
  \le
  2\E[\inner{\grad f(x)}u^2\norm u^2]
  +
  \frac{\mu^4}{18}M_3^2\E\norm u^8.
\]
Then, the Gaussian moment identities
\[
  \E[\inner au^2\norm u^2]=(d+2)\norm a^2,
  \qquad
  \E\norm u^8=d(d+2)(d+4)(d+6)
\]
which follow from \eqref{eq:gauss-norm-moments} and \eqref{eq:gauss-inner-moments},
yield the result.
\end{proof}

\begin{proof}[Proof of Proposition~\ref{prop:rand-error-S-i-ii}]
Let $M_3=M_3(x_0,\Delta_0),$ 
  $Y_j:=D_\mu(u_j)u_j$,
  and
  $\gest(x)=\frac{d}{m}\sum_{j=1}^m Y_j.$

\emph{(i)}
From \eqref{eq:moment2} we 
know that $\E[uu^\top]=\frac1d\Id$. Hence, by
\eqref{eq:cdiff-rem}, $\bar g(\cdot)$ can be written as
\(
  \bar g(x)
  =
  \grad f(x)
  +
  \frac{d\mu^2}{6}
  \E\!\left[
    \grad^3f(x+\zeta u)[u,u,u]u
  \right].
\)
Since $\norm u=1$, the estimate
\(
  \norm{\bias(x)}
  \le
  \frac{d\mu^2}{6}M_3
\)
follows from the definition of $M_3$ in \eqref{eq:Mj}.

\emph{(ii)}
Since $u_1,\dots,u_m$ are i.i.d., so are $Y_1,\dots,Y_m$. As in \eqref{eq:aux_iid} we can conclude that
\(
  \E[\norm{\gest(x)-\bar g(x)}^2]
  =
  \frac{d^2}{m}\Var(Y_1)
  \le
  \frac{d^2}{m}\E[D_\mu(u)^2\norm u^2]
\)
and by using 
\eqref{eq:cdiff-rem} the inequality
\[
  \E[D_\mu(u)^2\norm u^2]
  \le
  2\E[\inner{\grad f(x)}u^2]
  +
  \frac{\mu^4}{18}M_3^2
\]
follows. Finally, the variance bound follows from the identity
$\E[\inner au^2]=\frac1d\norm a^2$ 
(see \eqref{eq:moment2}).
\end{proof}

Before we continue with a proof of Proposition~\ref{prop:rand-error-G-iii-iv}, for
the RMSE, we introduce 
the following common notation 
\[
  a:=\grad f(x),
  \qquad
  A:=\inner au\,u,
  \qquad
  B:=\bigl(D_\mu(u)-\inner au\bigr)u, \qquad Y=A+B.
\]
Additionally, we note that by
the reverse triangle inequality for the $L^2$ norm it holds that
\begin{equation}\label{eq:variance-perturbation}
  \sqrt{\Var(Y)}
  \ge
  \sqrt{\Var(A)}-\sqrt{\Var(B)}
\end{equation}
and 
\eqref{eq:cdiff-error} implies
\begin{equation}\label{eq:B-variance-bound}
  \sqrt{\Var(B)}
  \le
  \sqrt{\E\norm B^2}
  \le
  \frac{\mu^2M_3}{6}\sqrt{\E\norm u^8}.
\end{equation}

\begin{proof}[Proof of Proposition~\ref{prop:rand-error-G-iii-iv}]
\emph{(i)}
The bias--variance decomposition $\bar g(x)=\grad f(x)+\bias(x)$ and Proposition~\ref{prop:rand-error-G-i-ii},
together with $\sqrt{a+b}\le\sqrt a+\sqrt b$, give
\[
\begin{aligned}
  \E[&\norm{\gest(x)-\grad f(x)}^2]^{1/2}
  \le
  \norm{\bias(x)}
  +
  \sqrt{
    \E[\norm{\gest(x)-\bar g(x)}^2]
  } \\
  &\le
  \frac{\mu^2}{6}d(d+2)M_3
  +
  \sqrt{
    \frac{2(d+2)}{m}\norm{\grad f(x)}^2
    +
    \frac{\mu^4}{18m}
    d(d+2)(d+4)(d+6)M_3^2
  },
\end{aligned}
\]
which yields the stated estimate.

\emph{(ii)}
From \eqref{eq:gauss-inner-moments} we have
\(
  \E[A]=a,
  \;
  \E\norm A^2=(d+2)\norm a^2
\)
for the Gaussian distribution
and hence
\(
  \Var(A)=(d+1)\norm a^2
\)
is satisfied. Moreover, \eqref{eq:B-variance-bound} and the Gaussian eighth moment give
\[
  \sqrt{\Var(B)}
  \le
  \frac{\mu^2M_3}{6}
  \sqrt{d(d+2)(d+4)(d+6)}.
\]
Thus, by \eqref{eq:variance-perturbation} and the independence of the random variables, the inequality 
\[
  \E[\norm{\gest(x)-\bar g(x)}^2]^{1/2}
  \ge
  \frac1{\sqrt m}
  \left[
    \sqrt{d+1}\norm a
    -
    \frac{\mu^2M_3}{6}
    \sqrt{d(d+2)(d+4)(d+6)}
  \right]_+
\]
is satisfied. Finally, the bias--variance decomposition gives
\(
  \E[\norm{\gest(x)-\grad f(x)}^2]
  \ge
  \E[\norm{\gest(x)-\bar g(x)}^2],
\)
which transfers the bound to the RMS error. The final claim follows immediately
from the imposed condition on $\mu$.
\end{proof}

\begin{proof}[Proof of Proposition~\ref{prop:rand-error-S-iii-iv}]
\emph{(i)}
The bias--variance decomposition $\bar g(x)=\grad f(x)+\bias(x)$ and Proposition~\ref{prop:rand-error-S-i-ii}
give
\[
\begin{aligned}
  \E[\norm{\gest(x)-\grad f(x)}^2]^{1/2}
  &\le
  \norm{\bias(x)}
  +
  \sqrt{
    \E[\norm{\gest(x)-\bar g(x)}^2]
  } \\
  &\le
  \frac{d\mu^2}{6}M_3
  +
  \sqrt{
    \frac{2d}{m}\norm{\grad f(x)}^2
    +
    \frac{d^2\mu^4}{18m}M_3^2
  }.
\end{aligned}
\]

\emph{(ii)}
For the spherical distribution, we know from  \eqref{eq:moment2} that
\(
  \E[A]=\frac1d a,
  \;
  \E\norm A^2=\frac1d\norm a^2,
\)
and hence
\(
  \Var(A)=\frac{d-1}{d^2}\norm a^2
\)
holds. 
Since $\norm u=1$, \eqref{eq:B-variance-bound} gives
\(
  \sqrt{\Var(B)}
  \le
  \frac{\mu^2}{6}M_3.
\)
Accordingly, by \eqref{eq:variance-perturbation} and the independence of the random variables,
\[
  \E[\norm{\gest(x)-\bar g(x)}^2]^{1/2}
  \ge
  \frac1{\sqrt m}
  \left[
    \sqrt{d-1}\norm a
    -
    \frac{d\mu^2}{6}M_3
  \right]_+
\]
follows.
The bias--variance decomposition transfers this bound to the RMS error, and the
final claim follows from the stated restriction on $\mu$.
\end{proof}

\end{appendices}

\bibliography{main}

\end{document}